\newtheorem{theorem}{Theorem}
\theoremstyle{plain}
\newtheorem{definition}[theorem]{Definition}
\newtheorem{example}[theorem]{Example}
\newtheorem{lemma}[theorem]{Lemma}
\newtheorem{proposition}[theorem]{Proposition}
\newtheorem{remark}[theorem]{Remark}
\numberwithin{equation}{section}
\numberwithin{theorem}{section}
\newcommand{\R}{\ensuremath{\mathbb{R}}}
\newcommand{\E}{\ensuremath{\mathbb{E}}}
\newcommand{\N}{\ensuremath{\mathbb{N}}}
\newcommand{\C}{\ensuremath{\mathbb{C}}}
\newcommand{\Sg}{\ensuremath{\mathbb{S}}}
\newcommand{\Le}{{\mathcal{L}}}
\newcommand{\Y}{{\mathcal{Y}}}
\newcommand{\X}{{\mathcal{X}}}
\newcommand{\dom}{\mathrm{dom}} 
\newcommand{\tr}{\mathrm{Tr}} 
\def\e{{\mathrm{e}}}
\title{Ornstein-Uhlenbeck processes in Hilbert space with non-Gaussian stochastic volatility}
\author[Benth]{Fred Espen Benth}
\address[Fred Espen Benth]{\\
Department of Mathematics\\
University of Oslo\\
P.O. Box 1053, Blindern\\
N--0316 Oslo, Norway \\
and \\
Centre for Advanced Study \\
Drammensveien 78 \\
N-0271 Oslo, Norway}
\email[]{fredb\@@math.uio.no}
\urladdr{http://folk.uio.no/fredb/}
\author[R\"udiger]{Barbara R\"udiger}
\address[Barbara R\"udiger]{ \\
Fachbereich C \\
Bergische Universit\"at Wuppertal \\
D-42119 Wuppertal, Germany}
\email[]{ruediger\@@uni-wuppertal.de}
\author[S\"uss]{Andre S\"uss}
\address[Andre S\"uss]{\\
Centre for Advanced Study \\
Drammensveien 78 \\
N-0271 Oslo, Norway}
\email[]{suess.andre\@@web.de}
\date{\today}
\thanks{F. E. Benth acknowledges financial support from "Managing Weather Risk in Energy Markets (MAWREM)", funded by the Norwegian 
Research Council. B. R\"udiger is grateful for the financial support from the Centre for Advanced Study (CAS) in Oslo. We are grateful for comments by Josef Teichmann.}
\begin{document}
\maketitle
\begin{abstract}
We propose a non-Gaussian operator-valued extension of the Barndorff-Nielsen and Shephard stochastic volatility dynamics, 
defined as the square-root of an operator-valued Ornstein-Uhlenbeck process with L\'evy noise and bounded drift.
We derive conditions for the positive
definiteness of the Ornstein-Uhlenbeck process, where in particular we must restrict to operator-valued L\'evy processes with
"non-decreasing paths". It turns out that the volatility model allows for an explicit calculation of its characteristic function, showing 
an affine structure. We introduce another Hilbert space-valued Ornstein-Uhlenbeck process with Wiener noise perturbed by this class of 
stochastic volatility dynamics. Under a strong commutativity 
condition between the covariance operator of the Wiener process and the stochastic volatility,
we can derive an analytical expression for the characteristic functional of the Ornstein-Uhlenbeck process perturbed by stochastic volatility
if the noises are independent. 
The case of operator-valued compound Poisson processes as driving noise in the volatility is discussed as a particular example of interest.
We apply our results to futures prices in commodity markets, where we discuss our proposed stochastic volatility model in light of
ambit fields. 
\end{abstract} 

\section{Introduction}
In this paper we introduce and analyse an Ornstein-Uhlenbeck (OU) process 
$$
dX(t)=\mathcal{A} X(t)\,dt+\sigma(t)\,dB(t)
$$ 
taking values in a separable Hilbert space $H$. Here, $\mathcal{A}$ is a densely defined unbounded operator on $H$,
$B$ is an $H$-valued Wiener process and $\sigma(t)$ is a predictable operator-valued process being integrable with respect to 
$B$. We shall be concerned with a particular class of stochastic volatility models $\sigma(t)$ of a non-Gaussian nature.
   
OU processes with values in Hilbert space provide a natural infinite dimensional formulation for many linear (parabolic) 
stochastic partial differential equations
(see, e.g., Da Prato and Zabczyk~\cite{DaPZ}, Gawarecki and Mandrekar~\cite{GM} and Peszat and Zabczyk~\cite{PZ}). 
Our main motivation for studying Hilbert space-valued OU processes comes from the modelling of futures prices in commodity markets, where the
dynamics follow a class of hyperbolic 
stochastic partial differential equations (see Benth and Kr\"uhner~\cite{BK-HJM,BK2}).

Barndorff-Nielsen and Shephard~\cite{BNS} proposed a flexible class of stochastic volatility (SV) models based on real-valued OU processes
driven by a subordinator (a pure-jump L\'evy process with non-negative drift and positive jumps). This class, which we name 
the BNS SV model, has been applied to model financial time series like exchange rates and stock prices (see e.g. Barndorff-Nielsen and
Shephard ~\cite{BNS}). Benth~\cite{B-mf} proposed the BNS SV model in an exponential mean-reversion dynamics to model gas prices collected
from the UK market. Later, Benth and Vos \cite{BV1,BV2} extended this to a multifactor framework to model prices in
energy markets. Their extension of the BNS SV model to a multivariate context is based on the work by
Barndorff-Nielsen and Stelzer~\cite{BNStelzer}. There are several papers dealing, both empirically and theoretically, with stochastic volatility 
in commodity prices (see e.g., Geman \cite{Geman}, Hikspoors and Jaimungal~\cite{HJ} and Schwartz and Trolle~\cite{ST}).

In the present paper we lift the multivariate BNS SV model by Barndorff-Nielsen and Stelzer~\cite{BNStelzer} to 
an operator-valued stochastic process, providing a very general stochastic volatility dynamics. In particular, 
we consider the "stochastic variance process" $\mathcal{Y}(t)$ taking values in the space of Hilbert-Schmidt operators on
$H$,
$$
d\mathcal{Y}(t)=\C \mathcal{Y}(t)\,dt+d\mathcal{L}(t)\,,
$$
where $\mathcal{L}$ is a square-integrable L\'evy process in the space of Hilbert-Schmidt operators on $H$ and $\C$ a 
bounded operator on the same space. We state conditions on $\C$ and $\mathcal{L}$ to ensure that $\mathcal{Y}$ is a 
non-negative definite self -adjoint operator, and in this case we define $\sigma(t):=\mathcal{Y}^{1/2}(t)$. In fact, 
the paths of the process $t\mapsto (\mathcal{L}(t)f,f)_H$ must be increasing for every $f\in H$ to have non-negative definite
$\mathcal Y$. This property is analogous to the assumption the  real-valued BNS SV model is driven by a subordinator
process. since $t\mapsto (\mathcal L(t)f,f)_H$ is equal to the scalar product of $\mathcal{L}(t)$ with $f\otimes f$ in the space
of Hilbert-Schmidt operators on $H$, and thus a real-valued L\'evy process with  non-decreasing paths (i.e., a
subordinator). We say that $\Le$ has "non-decreasing paths" and we show that 
such L\'evy processes have a continuous martingale part with covariance operator
having all symmetric Hilbert-Schmidt operators in its kernel.   

As a particular example a compound Poisson process 
is considered, where the jumps are defined to be the tensor product of a Hilbert space valued Gaussian random variable with itself.
We demonstrate that such a model leads to Gamma distributed jumps for certain interesting real-valued projections of the L\'evy process.
Furthermore, from a result of Fraisse and Viguier-Pla~\cite{FVP} the jumps will in general be Wishart distributed in infinite dimensions, 
and we can compute the characteristic functional of $\Le$ for self-adjoint test operators. 

Our operator-valued BNS SV model $\mathcal{Y}$ has a convenient affine structure, and we can compute its characteristic function.  
Moreover, if $\Le$ is independent of $B$, it is possible to derive an analytical expression for the characteristic function of the 
OU-process $X(t)$ in terms of the semigroups associated with the drift in $X$ and $\mathcal{Y}$ and the characteristic functional of 
$\Le$. To achieve this result, we must impose a rather strong commutativity condition between the covariance operator of the Wiener noise
$B$ and the stochastic volatility $\Y^{1/2}$. We find that $X$ is affine in itself and the stochastic volatility.  
Also, we show that the "mean-reversion adjusted returns" of $X$ are $H$-valued conditional Gaussian random variables, 
if these are conditioned on the volatility $\Y^{1/2}$ , which can be considered to be an  observable in a simplified filtering problem (see Remark \ref{Remarkfiltering} in Section 3). The "mean-reversion
adjusted returns" are defined as the increments of $X$ corrected by the semigroup of $\C$. 

We relate our general analysis to commodity futures markets.
In this respect, we focus on a process $X$ defined on a specific Hilbert space of functions on $\R_+$, the positive real-line, and with the unbounded operator in the drift being $\mathcal{A}=\partial/\partial x$. Then, $X(t,x)$ can be interpreted as the futures price at time $t\geq 0$
for a contract delivering the commodity at time $x\geq 0$, with a dynamics specified under the Heath-Jarrow-Morton-Musiela (HJMM) modelling
paradigm (see Heath, Jarrow and Morton \cite{HJM} and Musiela \cite{Mus}). We connect our general SV modelling approach to the analysis in
Benth and Kr\"uhner~\cite{BK-HJM,BK2} and the ambit field approach in 
Barndorff-Nielsen, Benth and Veraart~\cite{BNBV-forward,BNBV-cross}. 
We remark that this discussion can be extended to forward rate modelling under the
HJM paradigm in fixed-income theory (see Filipovic~\cite{filipovic} and Carmona and Theranchi \cite{CT} for an analysis of HJM models in 
infinite dimensions for fixed-income markets.).

Our results are presented as follows: In the next section we introduce the operator-valued BNS SV model and analyse its
properties. Section 3 defines the volatility-modulated OU process $X$ along with a discussion of its characteristics. Finally, in Section 4, we
discuss our model $X$ in the context of commodity futures price modelling.

\section{Operator-valued BNS stochastic volatility model}

Throughout the paper, $(\Omega,\mathcal{F},\{\mathcal{F}_t\}_{t\geq 0}, P)$ is a given filtered probability space.
Let $H$ be a separable Hilbert space with inner product denoted by $(\cdot,\cdot)_H$ and associated norm
$|\cdot|_H$. 
Introduce $\mathcal{H}:=L_{\text{HS}}(H)$, the space of Hilbert-Schmidt operators on $H$ into itself, with the usual inner product denoted
by $\langle\cdot,\cdot\rangle_{\mathcal{H}}$ and associated norm $\|\cdot\|_{\mathcal{H}}$. 
As $H$ is a separable Hilbert space, $\mathcal{H}$ becomes a separable Hilbert space as well.

Introduce $\C\in L(\mathcal{H})$, that is, a bounded linear operator from $\mathcal{H}$ into itself. In this paper, we shall pay
particular attention to two specific cases of $\C$, namely, 
the operator 
\begin{equation}
\label{def-C1}
\C_1:\mathcal{H}\rightarrow\mathcal{H},\qquad \mathcal{T}\mapsto\mathcal{C}\mathcal{T}\mathcal{C}^*
\end{equation} 
 or
the operator
\begin{equation}
\label{def-C2}
\C_2:\mathcal{H}\rightarrow\mathcal{H},\qquad \mathcal{T}\mapsto\mathcal{C}\mathcal{T}+\mathcal{T}\mathcal{C}^*\,.
\end{equation} 
Here, $\mathcal{C}\in L(H)$, $L(H)$ denoting the space of bounded linear operators in $H$ into itself.
We shall exclusively focus on $\mathcal{C}\neq 0$. 
The following lemma provides us with crucial properties for $\C_i, i=1,2$:
\begin{lemma}
\label{lemma:basic-prop-C}
It holds that $\C_i\in L(\mathcal{H})$ for $\C_i$ defined in \eqref{def-C1} and \eqref{def-C2}, with $\|\C_1\|_{\text{op}}\leq \|\mathcal{C}\|_{\text{op}}^2$ and
$\|\C_2\|_{\text{op}}\leq 2\|\mathcal{C}\|_{\text{op}}$. 
Moreover, $(\C_i\mathcal{T})^*=\C_i\mathcal{T}^*$ for every $\mathcal{T}\in\mathcal{H}$ and $i=1,2$. 
\end{lemma}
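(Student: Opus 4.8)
The plan is to verify each of the three claims in turn for both operators $\C_1$ and $\C_2$, relying on the standard properties of the Hilbert-Schmidt norm and the submultiplicativity of the operator norm with respect to composition. Recall that for $\mathcal{T}\in\mathcal{H}$ and $\mathcal{C}\in L(H)$ one has $\|\mathcal{C}\mathcal{T}\|_{\mathcal{H}}\leq\|\mathcal{C}\|_{\text{op}}\|\mathcal{T}\|_{\mathcal{H}}$ and likewise $\|\mathcal{T}\mathcal{C}^*\|_{\mathcal{H}}\leq\|\mathcal{C}^*\|_{\text{op}}\|\mathcal{T}\|_{\mathcal{H}}=\|\mathcal{C}\|_{\text{op}}\|\mathcal{T}\|_{\mathcal{H}}$; these are the two ingredients I would cite up front.

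First I would establish boundedness together with the norm bounds. For $\C_1$, applying the two inequalities above successively to $\mathcal{C}\mathcal{T}\mathcal{C}^*$ gives $\|\C_1\mathcal{T}\|_{\mathcal{H}}=\|\mathcal{C}\mathcal{T}\mathcal{C}^*\|_{\mathcal{H}}\leq\|\mathcal{C}\|_{\text{op}}\|\mathcal{T}\mathcal{C}^*\|_{\mathcal{H}}\leq\|\mathcal{C}\|_{\text{op}}^2\|\mathcal{T}\|_{\mathcal{H}}$, which shows $\C_1$ maps $\mathcal{H}$ into $\mathcal{H}$ and yields $\|\C_1\|_{\text{op}}\leq\|\mathcal{C}\|_{\text{op}}^2$. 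For $\C_2$, I would use the triangle inequality in $\mathcal{H}$ followed by the same two submultiplicativity estimates: $\|\C_2\mathcal{T}\|_{\mathcal{H}}=\|\mathcal{C}\mathcal{T}+\mathcal{T}\mathcal{C}^*\|_{\mathcal{H}}\leq\|\mathcal{C}\mathcal{T}\|_{\mathcal{H}}+\|\mathcal{T}\mathcal{C}^*\|_{\mathcal{H}}\leq 2\|\mathcal{C}\|_{\text{op}}\|\mathcal{T}\|_{\mathcal{H}}$, giving the desired bound. Linearity of both maps is immediate from the bilinearity of operator composition.

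Finally I would verify the adjoint-intertwining property $(\C_i\mathcal{T})^*=\C_i\mathcal{T}^*$. For $\C_1$ this follows from the standard identity $(\mathcal{C}\mathcal{T}\mathcal{C}^*)^*=\mathcal{C}^{**}\mathcal{T}^*\mathcal{C}^*=\mathcal{C}\mathcal{T}^*\mathcal{C}^*=\C_1\mathcal{T}^*$, using $(\mathcal{AB})^*=\mathcal{B}^*\mathcal{A}^*$ and $\mathcal{C}^{**}=\mathcal{C}$. For $\C_2$ I would compute $(\mathcal{C}\mathcal{T}+\mathcal{T}\mathcal{C}^*)^*=(\mathcal{C}\mathcal{T})^*+(\mathcal{T}\mathcal{C}^*)^*=\mathcal{T}^*\mathcal{C}^*+\mathcal{C}\mathcal{T}^*=\C_2\mathcal{T}^*$, again by the reversing property of the adjoint and $\mathcal{C}^{**}=\mathcal{C}$. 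None of the steps presents a genuine obstacle; the computation is entirely routine, the only point requiring mild care being that taking the Hilbert-space adjoint is an isometry on $\mathcal{H}$ so that $\|\mathcal{T}\mathcal{C}^*\|_{\mathcal{H}}$ can be controlled by $\|\mathcal{C}\|_{\text{op}}$ rather than $\|\mathcal{C}^*\|_{\text{op}}$, which are in any case equal.
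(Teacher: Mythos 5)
Your proof is correct and follows essentially the same route as the paper: the norm bounds come from submultiplicativity of composition against the Hilbert--Schmidt norm together with the adjoint-isometry observation $\|\mathcal{T}\mathcal{C}^*\|_{\mathcal{H}}=\|\mathcal{C}\mathcal{T}^*\|_{\mathcal{H}}$ (which the paper also invokes, just spelled out via an orthonormal basis), and the identity $(\C_i\mathcal{T})^*=\C_i\mathcal{T}^*$ is the same adjoint-reversal computation that the paper performs at the level of inner products. No gaps; the argument is complete as written.
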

\begin{proof}
For $\mathcal{S},\mathcal{T}\in\mathcal{H}$, $\C_i(\mathcal S+\mathcal T)=\C_i\mathcal S+\C_i\mathcal T$, where $i=1,2$. Hence, linearity holds. Moreover,
for an orthonormal basis $\{e_n\}_{n\in\N}$ in $H$,
\begin{align*}
\|\C_1\mathcal{T}\|_{\mathcal{H}}^2&=\|\mathcal{C}\mathcal{T}\mathcal{C}^*\|^2_{\mathcal{H}} \\
&=\sum_{n=1}^{\infty}|\mathcal{C}\mathcal{T}\mathcal{C}^*e_n|_H^2 \\
&\leq \|\mathcal{C}\|^2_{\text{op}}\sum_{n=1}^{\infty}|\mathcal{T}\mathcal{C}^*e_n|^2_H \\
&\leq \|\mathcal{C}\|^4_{\text{op}}\sum_{n=1}^{\infty}|\mathcal{T}e_n|^2_H \\
&=\|\mathcal{C}\|^4_{\text{op}}\|\mathcal{T}\|_{\mathcal{H}}^2\,.
\end{align*}
Here we have used that $\|\mathcal{T}\mathcal{C}^*\|_{\mathcal{H}}=\|\mathcal{C}\mathcal{T}^*\|_{\mathcal{H}}$. 
For $\C_2$, we have by the triangle inequality,
\begin{align*}
\|\C_2\mathcal{T}\|_{\mathcal{H}}&=\|\mathcal{C}\mathcal{T}+\mathcal{T}\mathcal{C}^*\|_{\mathcal{H}} \\
&\leq \|\mathcal{C}\mathcal{T}\|_{\mathcal{H}}+\|\mathcal{T}\mathcal{C}^*\|_{\mathcal{H}} \\
&\leq\|\mathcal{C}\|_{\text{op}} \|\mathcal{T}\|_{\mathcal{H}}+\|\mathcal{C}\|_{\text{op}}\|\mathcal{T}^*\|_{\mathcal{H}} \\
&=2\|\mathcal{C}\|_{\text{op}}\|\mathcal{T}\|_{\mathcal{H}}\,.
\end{align*}
Hence, the first claim of the lemma holds. 

For  $\mathcal{T}\in\mathcal{H}$, it follows that 
\begin{align*}
(\C_1\mathcal{T}f,g)_H&=(\mathcal{C}\mathcal{T}\mathcal{C}^*f,g)_H \\
&=(f,\mathcal{C}\mathcal{T}^*\mathcal{C}^*g)_H \\
&=(f,\C_1\mathcal{T}^*g)_H\,.
\end{align*}
An analogous computation shows that also $\C_2\mathcal{T}=\C_2\mathcal{T}^*$, and the second claim of the lemma holds.
Hence, the proof is complete. 
\end{proof}

Since $\C\in L(\mathcal{H})$, it follows that $\C$ generates a uniformly
continuous $C_0$-semigroup $\Sg(t)$, $t\geq 0$, with $\Sg(t)=\exp(t\C)$ (see for example 
Gawarecki and Mandrekar~\cite[Thm.~1.1]{GM}). We note the following for $\C_i$, $i=1,2$:
\begin{lemma}
\label{lem:C-semigroup}
For the $C_0$-semigroup $\Sg_i$ generated by $\C_i$ in \eqref{def-C1} and \eqref{def-C2}, $i=1,2$, resp., we have
$$
\Sg_1(t)\mathcal{T}=\sum_{n=0}^{\infty}\frac{t^n}{n!}\mathcal{C}^n\mathcal{T}(\mathcal{C}^*)^n\,,
$$
and
$$
\Sg_2(t)\mathcal{T}=\exp(t\mathcal{C})\mathcal{T}\exp(t\mathcal{C}^*)\,,
$$
for every $\mathcal{T}\in\mathcal{H}$.
\end{lemma}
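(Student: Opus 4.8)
The plan is to exploit that each $\C_i$ is a bounded operator on $\mathcal{H}$ by Lemma~\ref{lemma:basic-prop-C}, so that its $C_0$-semigroup is literally the operator exponential $\Sg_i(t)=\exp(t\C_i)=\sum_{n=0}^\infty \frac{t^n}{n!}\C_i^n$, with the series converging in the operator norm of $L(\mathcal{H})$. The two formulas then reduce to computing the iterated powers $\C_i^n\mathcal{T}$.

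For $\C_1$ I would first prove by induction on $n$ that $\C_1^n\mathcal{T}=\mathcal{C}^n\mathcal{T}(\mathcal{C}^*)^n$ for every $\mathcal{T}\in\mathcal{H}$. The case $n=0$ is immediate, and the inductive step only uses associativity of operator composition: $\C_1^{n+1}\mathcal{T}=\C_1\bigl(\mathcal{C}^n\mathcal{T}(\mathcal{C}^*)^n\bigr)=\mathcal{C}\,\mathcal{C}^n\mathcal{T}(\mathcal{C}^*)^n\,\mathcal{C}^*=\mathcal{C}^{n+1}\mathcal{T}(\mathcal{C}^*)^{n+1}$. Substituting into the exponential series gives the stated expression for $\Sg_1(t)\mathcal{T}$. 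To justify that the resulting series converges in $\mathcal{H}$, I would note that iterating the bound of Lemma~\ref{lemma:basic-prop-C} yields $\|\mathcal{C}^n\mathcal{T}(\mathcal{C}^*)^n\|_{\mathcal{H}}=\|\C_1^n\mathcal{T}\|_{\mathcal{H}}\leq\|\mathcal{C}\|_{\text{op}}^{2n}\|\mathcal{T}\|_{\mathcal{H}}$, so the series is dominated by $\sum_n \frac{t^n}{n!}\|\mathcal{C}\|_{\text{op}}^{2n}\|\mathcal{T}\|_{\mathcal{H}}=\e^{t\|\mathcal{C}\|_{\text{op}}^2}\|\mathcal{T}\|_{\mathcal{H}}<\infty$.

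For $\C_2$ the cleanest route is to split the generator into left and right multiplication. Introduce $L,R\in L(\mathcal{H})$ by $L\mathcal{T}=\mathcal{C}\mathcal{T}$ and $R\mathcal{T}=\mathcal{T}\mathcal{C}^*$; both are bounded with operator norm at most $\|\mathcal{C}\|_{\text{op}}$, and again by associativity they commute, $LR\mathcal{T}=\mathcal{C}\mathcal{T}\mathcal{C}^*=RL\mathcal{T}$. Since $\C_2=L+R$ is a sum of commuting bounded operators, the exponential factors, $\exp(t\C_2)=\exp(tL)\exp(tR)$. The same power-series computation as above (now with only one-sided multiplication) identifies $\exp(tL)\mathcal{S}=\exp(t\mathcal{C})\mathcal{S}$ and $\exp(tR)\mathcal{S}=\mathcal{S}\exp(t\mathcal{C}^*)$ for any $\mathcal{S}\in\mathcal{H}$, whence $\Sg_2(t)\mathcal{T}=\exp(tL)\bigl(\mathcal{T}\exp(t\mathcal{C}^*)\bigr)=\exp(t\mathcal{C})\mathcal{T}\exp(t\mathcal{C}^*)$.

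The only genuinely delicate point is the factorization $\exp(t(L+R))=\exp(tL)\exp(tR)$, which fails without commutativity; I would therefore state the commutativity $LR=RL$ explicitly and invoke the standard identity $\exp(A+B)=\exp(A)\exp(B)$ for commuting bounded operators on a Banach space. As an alternative that sidesteps this lemma entirely, one can verify directly that $\Phi(t):=\exp(t\mathcal{C})\mathcal{T}\exp(t\mathcal{C}^*)$ satisfies $\Phi(0)=\mathcal{T}$ and $\Phi'(t)=\mathcal{C}\Phi(t)+\Phi(t)\mathcal{C}^*=\C_2\Phi(t)$ by the product rule together with $\frac{d}{dt}\exp(t\mathcal{C})=\mathcal{C}\exp(t\mathcal{C})$, and then conclude by uniqueness of the solution to this linear initial value problem, i.e. by uniqueness of the semigroup generated by $\C_2$.
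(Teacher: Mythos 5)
Your proof is correct. For $\Sg_1$ it coincides with the paper's argument: both iterate $\C_1^n\mathcal{T}=\mathcal{C}^n\mathcal{T}(\mathcal{C}^*)^n$ and substitute into the exponential series (your norm bound $\|\C_1^n\mathcal{T}\|_{\mathcal{H}}\leq\|\mathcal{C}\|_{\text{op}}^{2n}\|\mathcal{T}\|_{\mathcal{H}}$ is a useful addition that justifies convergence explicitly). For $\Sg_2$ you take a more structured route than the paper. The paper expands $\exp(t\mathcal{C})\mathcal{T}\exp(t\mathcal{C}^*)$ as the double series $\sum_{n,m\geq 0}\frac{t^{n+m}}{n!\,m!}\mathcal{C}^n\mathcal{T}\mathcal{C}^{*m}$, expands $\exp(t\C_2)\mathcal{T}$ as a single series, and then asserts that ``spelling out $\C_2^k\mathcal{T}$ and comparing terms'' yields equality; the spelled-out form is the binomial expansion $\C_2^k\mathcal{T}=\sum_{n+m=k}\binom{k}{n}\mathcal{C}^n\mathcal{T}\mathcal{C}^{*m}$, which is valid precisely because left multiplication by $\mathcal{C}$ and right multiplication by $\mathcal{C}^*$ commute. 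Your decomposition $\C_2=L+R$ with $LR=RL$, combined with the identity $\exp(t(L+R))=\exp(tL)\exp(tR)$ for commuting bounded operators, is exactly this commutativity made explicit and packaged in a standard lemma; it buys a rigorous one-line justification of the step the paper leaves as a term-comparison sketch. Your alternative argument --- checking that $\Phi(t)=\exp(t\mathcal{C})\mathcal{T}\exp(t\mathcal{C}^*)$ satisfies $\Phi(0)=\mathcal{T}$ and $\Phi'(t)=\C_2\Phi(t)$, then invoking uniqueness for the linear initial value problem --- is genuinely different from the paper's proof, avoids series manipulation altogether, and is the argument that would generalize most readily if the bounded operator $\mathcal{C}$ were replaced by an unbounded generator.
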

\begin{proof}
For $\mathcal{T}\in\mathcal{H}$, we find for $n\geq 1$
$$
\C_1^n\mathcal{T}=\C_1^{n-1}(\mathcal{C}\mathcal{T}\mathcal{C}^*)\,,
$$
and iterating this yields
$$
\C_1^n\mathcal{T}=\mathcal{C}^n\mathcal{T}\mathcal{C}^{*n}\,.
$$
Hence, the result for $\Sg_1$ follows. 

For the case $\C_2$, note that 
$$
\exp(t\mathcal{C})\mathcal{T}\exp(t\mathcal{C}^*)=\sum_{n,m=0}^{\infty}\frac{t^{n+m}}{n!m!}\mathcal{C}^n\mathcal{T}\mathcal{C}^{*m}\,.
$$
On the other hand,
$$
\exp(t\C_2)\mathcal{T}=\sum_{k=0}^{\infty}\frac{t^k}{k!}\C_2^n\mathcal{T}\,.
$$
Spelling out $\C_2^n\mathcal{T}$ and comparing with the terms in the double-sum above, we show the second result.
The proof of the lemma is complete.
\end{proof}

We now introduce the operator-valued BNS stochastic volatility model. To this end,
assume that $\{\Y(t)\}_{t\geq 0}$ is 
a $\mathcal{H}$-valued stochastic process satisfying the dynamics
\begin{equation}
\label{dyn-Y}
d\Y(t)=\C\Y(t)\,dt+d\Le(t)\,\qquad \Y(0)=\Y_0\,.
\end{equation}
Here, $\Le$ is an $\mathcal{H}$-valued L\'evy process and $\Y_0\in\mathcal{H}$. 
We suppose that $\Le$ is square-integrable, with covariance operator $\mathbb{Q}_{\Le}$. 
Recall that $\mathbb{Q}_{\Le}$ is a self-adjoint non-negative definite trace class operator on
$\mathcal{H}$. We have,
\begin{lemma}
\label{lem:integrability}
For every $t\geq 0$, it holds
$$
\int_0^t\|\Sg(t-s)\mathbb{Q}_{\Le}^{1/2}\|_{L_{\text{HS}}(\mathcal{H})}^2\,ds\leq\frac{\tr(\mathbb{Q}_{\Le})}{2\|\C\|_{\text{op}}}(\e^{2t\|\C\|_{\text{op}}}-1)<\infty\,.
$$
\end{lemma}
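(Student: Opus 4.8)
The plan is to reduce the whole estimate to two elementary operator-theoretic facts together with an explicit elementary integration. Throughout one works entirely inside the Hilbert space $\mathcal{H}=L_{\text{HS}}(H)$, so $\Sg(t-s)=\exp((t-s)\C)$ is a bounded operator on $\mathcal{H}$ and $\mathbb{Q}_{\Le}^{1/2}$ is an element of $L_{\text{HS}}(\mathcal{H})$, i.e. a Hilbert--Schmidt operator on $\mathcal{H}$. First I would invoke the ideal property of the Hilbert--Schmidt operators: for any $A\in L(\mathcal{H})$ and any $T\in L_{\text{HS}}(\mathcal{H})$ one has $\|AT\|_{L_{\text{HS}}(\mathcal{H})}\leq\|A\|_{\text{op}}\|T\|_{L_{\text{HS}}(\mathcal{H})}$. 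Applied with $A=\Sg(t-s)$ and $T=\mathbb{Q}_{\Le}^{1/2}$ this gives, for each fixed $s\in[0,t]$,
$$
\|\Sg(t-s)\mathbb{Q}_{\Le}^{1/2}\|_{L_{\text{HS}}(\mathcal{H})}^2\leq\|\Sg(t-s)\|_{\text{op}}^2\,\|\mathbb{Q}_{\Le}^{1/2}\|_{L_{\text{HS}}(\mathcal{H})}^2\,.
$$

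The second ingredient controls the operator norm of the semigroup. Since $\C\in L(\mathcal{H})$ generates the uniformly continuous semigroup $\Sg(t)=\exp(t\C)=\sum_{n\geq0}t^n\C^n/n!$, the triangle inequality and submultiplicativity of the operator norm yield the standard bound $\|\Sg(s)\|_{\text{op}}\leq\sum_{n\geq0}s^n\|\C\|_{\text{op}}^n/n!=\e^{s\|\C\|_{\text{op}}}$ for all $s\geq0$. Hence $\|\Sg(t-s)\|_{\text{op}}^2\leq\e^{2(t-s)\|\C\|_{\text{op}}}$.

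For the remaining factor I would identify the Hilbert--Schmidt norm of $\mathbb{Q}_{\Le}^{1/2}$ with the trace of $\mathbb{Q}_{\Le}$. Because $\mathbb{Q}_{\Le}$ is self-adjoint and non-negative definite trace class, its square root $\mathbb{Q}_{\Le}^{1/2}$ is again self-adjoint and non-negative, so
$$
\|\mathbb{Q}_{\Le}^{1/2}\|_{L_{\text{HS}}(\mathcal{H})}^2=\tr\big(\mathbb{Q}_{\Le}^{1/2}(\mathbb{Q}_{\Le}^{1/2})^*\big)=\tr\big(\mathbb{Q}_{\Le}^{1/2}\mathbb{Q}_{\Le}^{1/2}\big)=\tr(\mathbb{Q}_{\Le})<\infty\,.
$$
In particular this already confirms $\mathbb{Q}_{\Le}^{1/2}\in L_{\text{HS}}(\mathcal{H})$, which legitimizes the use of the ideal property above.

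Combining the three bounds gives the pointwise estimate $\|\Sg(t-s)\mathbb{Q}_{\Le}^{1/2}\|_{L_{\text{HS}}(\mathcal{H})}^2\leq\e^{2(t-s)\|\C\|_{\text{op}}}\tr(\mathbb{Q}_{\Le})$, and it remains only to integrate in $s$. With the substitution $u=t-s$ one computes $\int_0^t\e^{2(t-s)\|\C\|_{\text{op}}}\,ds=\int_0^t\e^{2u\|\C\|_{\text{op}}}\,du=\frac{1}{2\|\C\|_{\text{op}}}(\e^{2t\|\C\|_{\text{op}}}-1)$, which after multiplication by $\tr(\mathbb{Q}_{\Le})$ yields exactly the claimed bound, finite because $\tr(\mathbb{Q}_{\Le})<\infty$. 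I do not expect any genuine obstacle here; the only points needing a word of care are the justification that $\mathbb{Q}_{\Le}^{1/2}$ is Hilbert--Schmidt with $\|\mathbb{Q}_{\Le}^{1/2}\|^2_{L_{\text{HS}}(\mathcal{H})}=\tr(\mathbb{Q}_{\Le})$, and the implicit hypothesis $\|\C\|_{\text{op}}\neq0$ needed for the division (guaranteed in the cases of interest $\C=\C_i$, $i=1,2$, since $\mathcal{C}\neq0$).
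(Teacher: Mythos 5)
Your proof is correct and follows essentially the same route as the paper: bound $\|\Sg(t-s)\mathbb{Q}_{\Le}^{1/2}\|_{L_{\text{HS}}(\mathcal{H})}$ by $\|\Sg(t-s)\|_{\text{op}}\|\mathbb{Q}_{\Le}^{1/2}\|_{L_{\text{HS}}(\mathcal{H})}$, control the semigroup norm by $\e^{(t-s)\|\C\|_{\text{op}}}$ via the exponential series, identify $\|\mathbb{Q}_{\Le}^{1/2}\|^2_{L_{\text{HS}}(\mathcal{H})}$ with $\tr(\mathbb{Q}_{\Le})$, and integrate. The only cosmetic difference is that you invoke the ideal property of Hilbert--Schmidt operators abstractly, whereas the paper reproves it in place by summing over an orthonormal basis of $\mathcal{H}$; your explicit remark on needing $\|\C\|_{\text{op}}\neq 0$ for the division is a fair point the paper leaves implicit.
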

\begin{proof}
Note first that for any $\mathcal{T}\in\mathcal{H}$, we have by the representation of $\Sg$,
\begin{align*}
\|\Sg(u)\mathcal{T}\|_{\mathcal{H}}&\leq\|\Sg(u)\|_{\text{op}}\|\mathcal{T}\|_{\mathcal{H}} \\
&\leq\|\mathcal{T}\|_{\mathcal{H}}\sum_{k=0}^{\infty}\frac{u^k}{k!}\|\C\|_{\text{op}}^k \\
&=\e^{u\|\C\|_{\text{op}}}\|\mathcal{T}\|_{\mathcal{H}}\,.
\end{align*}
But then, for an orthonormal basis $\{\mathcal{T}_n\}_{n\in\N}\subset\mathcal{H}$\,,
\begin{align*}
\|\Sg(u)\mathbb{Q}_{\Le}^{1/2}\|_{L_{\text{HS}}(\mathcal{H})}^2&=\sum_{n=1}^{\infty}\|\Sg(u)\mathbb{Q}_{\Le}^{1/2}\mathcal{T}_n\|^2_{\mathcal{H}} \\
&\leq \e^{2u\|\C\|_{\text{op}}}\sum_{n=1}^{\infty}\|\mathbb{Q}^{1/2}_{\Le}\mathcal{T}_n\|_{\mathcal{H}}^2 \\
&=\e^{2u\|\C\|_{\text{op}}}\tr(\mathbb{Q}_{\Le})\,.
\end{align*}
Here we have used the fact that 
$$
\tr(\mathbb{Q}_{\Le})=\sum_{n=1}^{\infty}\langle\mathbb{Q}_{\Le}\mathcal{T}_n,\mathcal{T}_n\rangle_{\mathcal{H}}=
\sum_{n=1}^{\infty}\|\mathbb{Q}_{\Le}^{1/2}\mathcal{T}_n\|^2_{\mathcal{H}}\,.
$$
Hence, since $\|\C\|_{\text{op}}<\infty$ and $\mathbb{Q}_{\Le}$ is a trace class operator, the result follows. 
\end{proof}
Invoking this lemma, it follows from the theory of Hilbert-space valued
stochastic differential equations (see {\it e.g.} Peszat and Zabczyk~\cite{PZ}) that there exists a unique mild solution
to \eqref{dyn-Y}
\begin{equation}
\label{sol-Y}
\Y(t)=\Sg(t)\Y_0+\int_0^t\Sg(t-s)\,d\Le(s)\,,
\end{equation}
for $t\geq 0$. In the next lemma we derive a bound for the $L^2$-norm of $\Y$:
\begin{lemma}
\label{lem:norm-bound-Y}
It holds that
$$
\E\left[\|\Y(t)\|_{\mathcal{H}}^2\right]\leq c\e^{2t\|\C\|_{\text{op}}}
$$
for a constant $c$ which is bounded by $\max(2\|\Y_0\|_{\mathcal{H}}^2,\tr(\mathbb{Q}_{\Le})/\|\C\|_{\text{op}})$. 
\end{lemma}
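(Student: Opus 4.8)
The plan is to work directly from the mild solution representation \eqref{sol-Y}, splitting $\Y(t)$ into its deterministic part $\Sg(t)\Y_0$ and the stochastic convolution $\int_0^t\Sg(t-s)\,d\Le(s)$, estimating the two contributions separately and recombining. First I would bound the deterministic part: reusing the operator-norm estimate $\|\Sg(u)\mathcal{T}\|_{\mathcal{H}}\leq\e^{u\|\C\|_{\text{op}}}\|\mathcal{T}\|_{\mathcal{H}}$ already established inside the proof of Lemma~\ref{lem:integrability}, one gets immediately
$$
\|\Sg(t)\Y_0\|_{\mathcal{H}}^2\leq\e^{2t\|\C\|_{\text{op}}}\|\Y_0\|_{\mathcal{H}}^2.
$$

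For the stochastic part, the key tool is the It\^o isometry for Hilbert-space-valued stochastic integrals of deterministic operator-valued integrands against the square-integrable L\'evy noise $\Le$ with covariance operator $\mathbb{Q}_{\Le}$ (see Peszat and Zabczyk~\cite{PZ}), which yields
$$
\E\left[\left\|\int_0^t\Sg(t-s)\,d\Le(s)\right\|_{\mathcal{H}}^2\right]=\int_0^t\|\Sg(t-s)\mathbb{Q}_{\Le}^{1/2}\|_{L_{\text{HS}}(\mathcal{H})}^2\,ds.
$$
This is exactly the quantity controlled in Lemma~\ref{lem:integrability}, so the right-hand side is at most $\tfrac{\tr(\mathbb{Q}_{\Le})}{2\|\C\|_{\text{op}}}(\e^{2t\|\C\|_{\text{op}}}-1)$.

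Combining the two estimates, I would use that the stochastic convolution is a mean-zero martingale, so that its $L^2$-inner product with the deterministic term vanishes; hence $\E[\|\Y(t)\|_{\mathcal{H}}^2]$ is the bare sum of the two bounds, with no cross term and—crucially—no spurious factor of two. Discarding the negative constant $-\tr(\mathbb{Q}_{\Le})/(2\|\C\|_{\text{op}})$ gives
$$
\E[\|\Y(t)\|_{\mathcal{H}}^2]\leq\left(\|\Y_0\|_{\mathcal{H}}^2+\frac{\tr(\mathbb{Q}_{\Le})}{2\|\C\|_{\text{op}}}\right)\e^{2t\|\C\|_{\text{op}}}.
$$
Finally I would read off $c$ from the elementary averaging bound $\tfrac12(x+y)\leq\max(x,y)$ with $x=2\|\Y_0\|_{\mathcal{H}}^2$ and $y=\tr(\mathbb{Q}_{\Le})/\|\C\|_{\text{op}}$, which shows the prefactor is dominated by $\max(2\|\Y_0\|_{\mathcal{H}}^2,\tr(\mathbb{Q}_{\Le})/\|\C\|_{\text{op}})$, exactly as claimed.

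The main obstacle is the careful treatment of the stochastic term: one must invoke the correct It\^o isometry for operator-valued integrands against a Hilbert-space L\'evy noise and recognize that the resulting integral is precisely the one already estimated in Lemma~\ref{lem:integrability}. The only other delicate point is the absence of the factor two, which hinges on the orthogonality of the deterministic and centered stochastic parts rather than on a crude $\|u+v\|^2\leq 2\|u\|^2+2\|v\|^2$ split; the latter would replace the stated $\max$ by a sum and thereby weaken the constant.
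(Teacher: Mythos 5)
Your decomposition and the treatment of the two separate terms match the paper's ingredients: the bound $\|\Sg(t)\Y_0\|_{\mathcal{H}}^2\leq \e^{2t\|\C\|_{\text{op}}}\|\Y_0\|_{\mathcal{H}}^2$ and the identification of the stochastic term with the integral already estimated in Lemma~\ref{lem:integrability}. The genuine gap is your orthogonality step. You assert that the stochastic convolution $\int_0^t\Sg(t-s)\,d\Le(s)$ is mean zero, so that the cross term against $\Sg(t)\Y_0$ vanishes and the It\^o isometry holds as an exact equality with right-hand side $\int_0^t\|\Sg(t-s)\mathbb{Q}_{\Le}^{1/2}\|^2_{L_{\text{HS}}(\mathcal{H})}\,ds$. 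But the paper only assumes that $\Le$ is square-integrable; it is not assumed centered, and $\mathbb{Q}_{\Le}$ is the covariance operator of the \emph{centered} process. Writing $m=\E[\Le(1)]$ and $\Le(t)=\hat{\Le}(t)+tm$ with $\hat{\Le}$ the martingale part, one has
$$
\E\left[\Big\|\int_0^t\Sg(t-s)\,d\Le(s)\Big\|_{\mathcal{H}}^2\right]=\Big\|\int_0^t\Sg(u)m\,du\Big\|_{\mathcal{H}}^2+\int_0^t\|\Sg(t-s)\mathbb{Q}_{\Le}^{1/2}\|^2_{L_{\text{HS}}(\mathcal{H})}\,ds\,,
$$
and the cross term equals $2\langle\Sg(t)\Y_0,\int_0^t\Sg(u)m\,du\rangle_{\mathcal{H}}$, which has no reason to vanish. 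This is not a corner case here: the lemma is meant to apply to the volatility model where $\Le$ has non-decreasing paths, so $(\E[\Le(1)]f,f)_H\geq 0$ with equality for all $f$ only in the trivial case $\Le\equiv 0$; indeed the proof of Prop.~\ref{prop:trace-QY} later computes $\E[\int_0^t\Sg(t-s)\,d\Le(s)]=\int_0^t\Sg(u)\,du\,\E[\Le(1)]$, which is nonzero in general. So the "no factor of two" identity on which your sharpened constant rests fails precisely in the intended setting (and since $\Y_0$, $m$ are non-negative definite and $\Sg$ is positivity-preserving there, the cross term you discard is in fact non-negative, not zero).

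By contrast, the paper's proof relies on no cancellation at all: it uses the crude split $\|u+v\|^2\leq 2\|u\|^2+2\|v\|^2$ together with Cor.~8.17 of Peszat and Zabczyk and Lemma~\ref{lem:integrability}, i.e.
$$
\E\left[\|\Y(t)\|_{\mathcal{H}}^2\right]\leq 2\|\Sg(t)\Y_0\|_{\mathcal{H}}^2+2\int_0^t\|\Sg(t-s)\mathbb{Q}_{\Le}^{1/2}\|^2_{L_{\text{HS}}(\mathcal{H})}\,ds\,.
$$
Your final arithmetic, $\tfrac12(x+y)\leq\max(x,y)$, is correct, and your route would genuinely deliver the stated $\max$ constant---but only under the additional hypothesis that $\Le$ is a L\'evy martingale, which is not granted. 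To repair your argument for general square-integrable $\Le$ you would have to carry the drift explicitly: center the noise, bound $\|\int_0^t\Sg(u)m\,du\|_{\mathcal{H}}\leq \|m\|_{\mathcal{H}}(\e^{t\|\C\|_{\text{op}}}-1)/\|\C\|_{\text{op}}$, and accept a constant that also involves $\|\E[\Le(1)]\|_{\mathcal{H}}$, which is not of the form claimed; alternatively, fall back on the paper's factor-two split, which needs no centering but produces the sum $2\|\Y_0\|_{\mathcal{H}}^2+\tr(\mathbb{Q}_{\Le})/\|\C\|_{\text{op}}$ rather than the maximum.
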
 
\begin{proof}
From the mild solution of $\Y(t)$ in \eqref{sol-Y} and the triangle inequality we find,
$$
\E\left[\|\Y(t)\|_{\mathcal{H}}^2\right]\leq 2\|\Sg(t)\Y_0\|_{\mathcal{H}}^2+2\int_0^t\|\Sg(t-s)\mathbb{Q}_{\Le}^{1/2}\|_{L_{\text{HS}}(\mathcal{H})}^2\,ds\,,
$$
where we used Cor.~8.17 in Peszat and Zabczyk~\cite{PZ}. But from Lemma~\ref{lem:integrability} above the result follows.
\end{proof}

Let us compute the conditional characteristic function of $\Y(t)$: To this end, let $t\geq s$ and note that 
$\Y(t)$ given $\Y(s)$ has the representation
\begin{equation}
\label{Y-sol-cond}
\Y(t)=\Sg(t-s)\Y(s)+\int_s^t\Sg(t-u)\,d\Le(u)\,.
\end{equation}
Before proceeding, we recall the cumulant of $\Le$, that is, the characteristic exponent of the L\'evy process $\Le$ defined
to be $\E[\exp(\mathrm{i}\langle\Le(t),\mathcal{T}\rangle_{\mathcal{H}}]=\exp(t\Psi_{\Le}(\mathcal{T}))$ for $\mathcal{T}\in\mathcal{H}$ 
(see Peszat and Zabczyk~\cite[Thm.~4.27]{PZ}):
\begin{equation}
\label{cumulant-L}
\Psi_{\Le}(\mathcal{T})=\mathrm{i}\langle\mathcal{D},\mathcal{T}\rangle_{\mathcal{H}}-\frac12\langle\mathbb{Q}_{\Le}^0\mathcal{T},\mathcal{T}\rangle_{\mathcal{H}}
+\int_{\mathcal{H}}( \e^{\mathrm{i}\langle\mathcal{Z},\mathcal{T}\rangle_{\mathcal{H}}}-1-\mathrm{i}\mathbf{1}_{\|\mathcal{Z}\|_{\mathcal{H}}\leq 1}\langle\mathcal{Z},\mathcal{T}\rangle_{\mathcal{H}})\,\nu(d\mathcal{Z})\,.
\end{equation}
Here, following Peszat and Zabczyk~\cite[Thms.~4.44 and 4.47]{PZ}, $\nu$ is the L\'evy measure on $\mathcal{H}$ and $\mathcal{D}\in\mathcal{H}$ is the drift of the L\'evy process, 
where for $\mathcal{T}\in\mathcal{H}$,
$$
\E[\langle\Le(1),\mathcal{T}\rangle_{\mathcal{H}}]=\langle\mathcal{D},\mathcal{T}\rangle_{\mathcal{H}}+\int_{\mathcal{H}\backslash\{\|\mathcal{Z}\|_{\mathcal{H}}<1\}}
\langle\mathcal{Z},\mathcal{T}\rangle_{\mathcal{H}}\,\nu(d\mathcal{Z})\,.
$$ 
Furthermore, $\mathbb{Q}_{\Le}=\mathbb{Q}_{\Le}^0+\mathbb{Q}_{\Le}^1$ and
$$
\langle\mathbb{Q}^1_{\Le}\mathcal T,\mathcal{U}\rangle_{\mathcal{H}}=\int_{\mathcal{H}}\langle\mathcal{T},\mathcal{Z}\rangle_{\mathcal{H}}\langle\mathcal{U},\mathcal{Z}\rangle_{\mathcal{H}}\,\nu(d\mathcal{Z})\,,\qquad\mathcal{T},\mathcal{U}\in\mathcal{H}\,.
$$
We have the following proposition, showing that $\Y$ is an {\it affine} process in $\mathcal{H}$:
\begin{proposition}
\label{prop:cumulant-Y}
For any $\mathcal{T}\in\mathcal{H}$ it holds that
$$
\E\left[\e^{\mathrm{i}\langle\Y(t),\mathcal{T}\rangle_{\mathcal{H}}}\,|\,\mathcal{F}_s\right]=\exp\left(\mathrm{i}\langle\Y(s),\Sg^*(t-s)\mathcal{T}\rangle_{\mathcal{H}}
+\int_0^{t-s}\Psi_{\Le}(\Sg^*(u)\mathcal{T})\,du\right)\,.
$$
\end{proposition}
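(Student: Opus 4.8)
The plan is to exploit the conditional representation \eqref{Y-sol-cond} together with the independent and stationary increments of $\Le$. First I would split the inner product $\langle\Y(t),\mathcal{T}\rangle_{\mathcal{H}}$ according to \eqref{Y-sol-cond} into the $\mathcal{F}_s$-measurable term $\langle\Sg(t-s)\Y(s),\mathcal{T}\rangle_{\mathcal{H}}=\langle\Y(s),\Sg^*(t-s)\mathcal{T}\rangle_{\mathcal{H}}$, where I pass to the adjoint $\Sg^*$, and the stochastic convolution term $\langle\int_s^t\Sg(t-u)\,d\Le(u),\mathcal{T}\rangle_{\mathcal{H}}$. Since the latter is built only from increments of $\Le$ over $(s,t]$, it is independent of $\mathcal{F}_s$, so the conditional expectation factorizes: the $\mathcal{F}_s$-measurable exponential comes out, producing the first term $\mathrm{i}\langle\Y(s),\Sg^*(t-s)\mathcal{T}\rangle_{\mathcal{H}}$ in the exponent, and it remains to compute the unconditional characteristic function of the stochastic integral.

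The heart of the proof is therefore the identity
$$
\E\Bigl[\exp\Bigl(\mathrm{i}\Bigl\langle\int_s^t\Sg(t-u)\,d\Le(u),\mathcal{T}\Bigr\rangle_{\mathcal{H}}\Bigr)\Bigr]
=\exp\Bigl(\int_s^t\Psi_{\Le}(\Sg^*(t-u)\mathcal{T})\,du\Bigr)\,,
$$
for the deterministic operator-valued integrand $u\mapsto\Sg(t-u)$. To establish it I would approximate $\Sg(t-\cdot)$ by simple functions $\Phi_n=\sum_j\Sg(t-u_j)\mathbf{1}_{[u_j,u_{j+1})}$ adapted to a partition of $(s,t]$. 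For such a step integrand one has $\langle\int_s^t\Phi_n\,d\Le,\mathcal{T}\rangle_{\mathcal{H}}=\sum_j\langle\Le(u_{j+1})-\Le(u_j),\Sg^*(t-u_j)\mathcal{T}\rangle_{\mathcal{H}}$; the increments are independent and stationary, so the characteristic function is the product $\prod_j\exp\bigl((u_{j+1}-u_j)\Psi_{\Le}(\Sg^*(t-u_j)\mathcal{T})\bigr)$, which is exactly $\exp$ of a Riemann sum for $\int_s^t\Psi_{\Le}(\Sg^*(t-u)\mathcal{T})\,du$. Here I use that the cumulant of the increment $\Le(u_{j+1})-\Le(u_j)$ equals $(u_{j+1}-u_j)\Psi_{\Le}$ by the L\'evy property and the representation \eqref{cumulant-L}. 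To pass to the limit I would combine the $L^2$-convergence of $\int_s^t\Phi_n\,d\Le$ to $\int_s^t\Sg(t-u)\,d\Le(u)$ (hence convergence of characteristic functions, the integrands being bounded by $1$) with the convergence of the Riemann sums, and finally substitute $v=t-u$ to turn $\int_s^t\Psi_{\Le}(\Sg^*(t-u)\mathcal{T})\,du$ into $\int_0^{t-s}\Psi_{\Le}(\Sg^*(v)\mathcal{T})\,dv$, giving the stated exponent.

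The main obstacle I anticipate is the rigorous passage to the limit in this characteristic-functional identity. One must justify that $L^2$-convergence of the approximating integrals yields convergence of their characteristic functions, which is handled by dominated convergence after extracting an a.s.-convergent subsequence, and that the Riemann sums of the cumulant converge to the integral. The latter rests on continuity and boundedness of $u\mapsto\Psi_{\Le}(\Sg^*(t-u)\mathcal{T})$ on $[s,t]$: continuity follows from the strong continuity of the semigroup $\Sg^*$ and the continuity of $\Psi_{\Le}$ on $\mathcal{H}$, while finiteness of the integral term in \eqref{cumulant-L} along the whole interval is where the square-integrability of $\Le$ and Lemma~\ref{lem:integrability} are essential. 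Alternatively, the characteristic-functional formula for deterministic integrands against a Hilbert-space L\'evy process may be quoted directly from Peszat and Zabczyk~\cite{PZ}, reducing the proof to the factorization argument and the change of variables above.
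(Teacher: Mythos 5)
Your proposal is correct, and its skeleton is exactly the paper's: split $\Y(t)$ via \eqref{Y-sol-cond} into the $\mathcal{F}_s$-measurable part $\Sg(t-s)\Y(s)$ and the stochastic convolution over $(s,t]$, factor the conditional expectation using the independent-increment property of $\Le$, and move $\Sg(t-s)$ onto $\mathcal{T}$ by duality. The only divergence is in how the key identity
$$
\E\left[\e^{\mathrm{i}\langle\int_s^t\Sg(t-u)\,d\Le(u),\mathcal{T}\rangle_{\mathcal{H}}}\right]=\exp\left(\int_0^{t-s}\Psi_{\Le}(\Sg^*(u)\mathcal{T})\,du\right)
$$
is obtained: the paper simply quotes Theorem~4.27 of Peszat and Zabczyk \cite{PZ} (the alternative you mention in your closing sentence), whereas you prove it from first principles by approximating the deterministic integrand with step functions, using stationary independent increments to write the characteristic function as the exponential of a Riemann sum of cumulants, and passing to the limit via $L^2$-convergence of the integrals and continuity of $u\mapsto\Psi_{\Le}(\Sg^*(t-u)\mathcal{T})$. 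Your route costs more work but is self-contained, and the continuity you need is unproblematic here because $\C$ is bounded, so $\Sg$ is even norm-continuous; the paper's route is shorter at the price of an external citation. A small bonus of your version: the explicit change of variables $v=t-u$ makes the exponent come out as $\int_0^{t-s}\Psi_{\Le}(\Sg^*(v)\mathcal{T})\,dv$, silently correcting the typo ($ds$ in place of $du$) in the paper's displayed formula.
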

\begin{proof}
From \eqref{Y-sol-cond} we find for $\mathcal{T}\in\mathcal{H}$,
\begin{align*}
\E\left[\e^{\mathrm{i}\langle\Y(t),\mathcal{T}\rangle_{\mathcal{H}}}\,|\,\mathcal{F}_s\right]&=\e^{\mathrm{i}\langle\Sg(t-s)\Y(s),\mathcal{T}\rangle_{\mathcal{H}}}
\E\left[\e^{\mathrm{i}\langle\int_s^t\Sg(t-u)\,d\Le(u),\mathcal{T}\rangle_{\mathcal{H}}}\,|\,\mathcal{F}_s\right] \\
&=\e^{\mathrm{i}\langle\Y(s),\Sg^*(t-s)\mathcal{T}\rangle_{\mathcal{H}}}
\E\left[\e^{\mathrm{i}\langle\int_s^t\Sg(t-u)\,d\Le(u),\mathcal{T}\rangle_{\mathcal{H}}}\right]\,. 
\end{align*}
Here, we have appealed to the independent increment property of L\'evy processes. Hence, from Peszat and Zabczyk~\cite[Thm.~4.27]{PZ} it holds that 
$$
\E\left[\e^{\mathrm{i}\langle\int_s^t\Sg(t-u)\,d\Le(u),\mathcal{T}\rangle_{\mathcal{H}}}\right]=\exp\left(\int_0^{t-s}\Psi_{\Le}(\Sg^*(u)\mathcal{T})\,ds\right)\,,
$$ 
with $\Psi_{\Le}$ defined in \eqref{cumulant-L}. The result follows.
\end{proof}

To define a stochastic volatility based on $\Y$ in \eqref{sol-Y} we must impose positivity constraints. This means that
we want to restrict our attention to $\mathcal{Y}$'s which are self-adjoint, non-negative definite Hilbert-Schmidt operators on $H$ for each $t\geq 0$. We now analyse additional conditions on
$\C$ and $\mathcal{L}$ ensuring non-negative definiteness of $\mathcal{Y}$. First, we show that $\Y(t)$ is self-adjoint whenever $\Le(t)$ is
under a mild condition on $\C$:
\begin{proposition}
\label{prop:Y-sym}
Suppose that $(\C\mathcal{T})^*=\C\mathcal{T}^*$ for any $\mathcal{T}\in\mathcal{H}$. If $\{\Le(t)\}_{t\geq 0}$ is a family of 
self-adjoint operators on $H$ and $\Y_0$ is self-adjoint, 
then $\Y(t)$ is a self-adjoint operator on $H$ for every
$t\geq 0$.
\end{proposition}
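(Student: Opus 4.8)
The plan is to work from the mild solution \eqref{sol-Y},
$$
\Y(t)=\Sg(t)\Y_0+\int_0^t\Sg(t-s)\,d\Le(s)\,,
$$
and to show separately that each of the two summands is a self-adjoint Hilbert--Schmidt operator; since a sum of self-adjoint operators is again self-adjoint, this gives the claim. The organizing tool is the involution $J\colon\mathcal{H}\to\mathcal{H}$, $\mathcal{T}\mapsto\mathcal{T}^*$, which is an additive isometry of $\mathcal{H}$ (so in particular continuous), and whose fixed set $\{\mathcal{T}\in\mathcal{H}:J\mathcal{T}=\mathcal{T}\}$ is exactly the self-adjoint elements and is closed in $\mathcal{H}$. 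The hypothesis $(\C\mathcal{T})^*=\C\mathcal{T}^*$ says precisely that $\C$ commutes with $J$, and the first task is to propagate this commutation to the semigroup $\Sg(t)=\exp(t\C)$.

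First I would show by induction that $(\C^n\mathcal{T})^*=\C^n\mathcal{T}^*$ for all $n\geq 0$ and $\mathcal{T}\in\mathcal{H}$: the case $n=0$ is trivial and $n=1$ is the hypothesis, while applying the hypothesis to $\C^n\mathcal{T}$ in place of $\mathcal{T}$ gives $(\C^{n+1}\mathcal{T})^*=(\C(\C^n\mathcal{T}))^*=\C(\C^n\mathcal{T})^*=\C^{n+1}\mathcal{T}^*$. Since $\Sg(t)\mathcal{T}=\sum_{n\geq 0}(t^n/n!)\C^n\mathcal{T}$ converges in $\mathcal{H}$ and the coefficients $t^n/n!$ are real, the additivity and continuity of $J$ yield $(\Sg(t)\mathcal{T})^*=\Sg(t)\mathcal{T}^*$. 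In particular, because $\Y_0^*=\Y_0$, the first summand obeys $(\Sg(t)\Y_0)^*=\Sg(t)\Y_0$ and is thus self-adjoint.

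For the stochastic convolution I would pass through Riemann sum approximations. Along a partition $0=s_0<\dots<s_m=t$ consider $R=\sum_j\Sg(t-s_j)\bigl(\Le(s_{j+1})-\Le(s_j)\bigr)$. Each increment $\Le(s_{j+1})-\Le(s_j)$ is self-adjoint since every $\Le(s)$ is, so by the semigroup property just established each summand of $R$ is self-adjoint, and by additivity of $J$ so is $R$. As the mesh tends to zero these sums converge in $L^2(\Omega;\mathcal{H})$ to $\int_0^t\Sg(t-s)\,d\Le(s)$ (the integrand $s\mapsto\Sg(t-s)$ is deterministic and norm-continuous); passing to a subsequence that converges $P$-a.s.\ in $\mathcal{H}$ and using that the self-adjoint operators form a closed subset of $\mathcal{H}$, the limit is self-adjoint $P$-a.s. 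Hence $\int_0^t\Sg(t-s)\,d\Le(s)$ is self-adjoint, and adding the two self-adjoint summands shows that $\Y(t)$ is self-adjoint for every $t\geq 0$.

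The step I expect to demand the most care is the interchange of the involution with the stochastic integral: one must justify the $L^2$-convergence of the Riemann sums for the deterministic operator-valued integrand $s\mapsto\Sg(t-s)$ and then transfer the $P$-a.s.\ self-adjointness through the limit, which is exactly where the closedness of the self-adjoint subset and the continuity of $J$ enter. The remaining ingredients, namely the induction on powers of $\C$ and the termwise application of $J$ to the exponential series, are routine.
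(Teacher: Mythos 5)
Your proof is correct, but it follows a genuinely different route from the paper's. The paper argues via uniqueness of solutions: using $(\C\mathcal{T})^*=\C\mathcal{T}^*$ together with the self-adjointness of $\Le(t)$ and $\Y_0$, it computes $(\Y(t)f,g)_H=(\Y_0f,g)_H+\int_0^t(f,\C\Y^*(s)g)_H\,ds+(f,\Le(t)g)_H$ for arbitrary $f,g\in H$, concludes that the adjoint process $\Y^*$ solves the very same linear equation $d\Y^*(t)=\C\Y^*(t)\,dt+d\Le(t)$ with $\Y^*(0)=\Y_0$, and invokes uniqueness to get $\Y^*(t)=\Y(t)$. You instead work directly on the mild solution: you push the involution $J\colon\mathcal{T}\mapsto\mathcal{T}^*$ through $\Sg(t)=\exp(t\C)$ by induction on powers of $\C$ and termwise passage to the limit in the exponential series (correctly using only additivity of $J$ and the realness of the coefficients $t^n/n!$), and you treat the stochastic convolution by Riemann-sum approximation plus the closedness of the set of self-adjoint elements of $\mathcal{H}$ under a.s.\ limits. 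Both arguments are sound. The paper's is shorter, delegating all the analysis to the uniqueness statement for the linear SDE; yours needs the $L^2$-convergence of Riemann sums for the deterministic, norm-continuous integrand $s\mapsto\Sg(t-s)$, but that is exactly the description of the stochastic integral the paper itself relies on when proving Proposition \ref{prop:Y-pos-def}, where non-negative definiteness is transferred to the limit in precisely your manner. Your approach therefore has the virtue of exhibiting self-adjointness (Proposition \ref{prop:Y-sym}) and non-negative definiteness (Proposition \ref{prop:Y-pos-def}) as two instances of one template, at the cost of somewhat more machinery than the paper's uniqueness appeal.
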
  
\begin{proof}
Let $f,g\in H$. Then we compute, using the dynamics of $\Y$ in \eqref{dyn-Y}, the assumption on $\C$, the self-adjointness of $\Le(t)$ and the definition of Bochner integration:
\begin{align*}
(\Y(t)f,g)_H&=\int_0^t(\C\Y(s)f,g)_H\,ds+(\Le(t)f,g)_H \\
&=\int_0^t(f,\C\Y^*(s)g)_H\,ds+(f,\Le(t)g)_H\,.
\end{align*}
Thus, as $f,g\in H$ are arbitrary, we find that 
$$
d\Y^*(t)=\C\Y^*(t)\,dt+d\Le(t)\,,
$$
with initial condition $\Y^*(0)=\Y_0$. But by uniqueness of solutions of this linear stochastic differential equation, 
$\Y^*(t)=\Y(t)$. 
\end{proof}
Recall from Lemma~\ref{lemma:basic-prop-C} that $(\C_i\mathcal T)^*=\C_i\mathcal T^*$ for $i=1,2$. 
\begin{example}
\label{trivial-ex-levy}
A trivial way to introduce a self-adjoint L\'evy process $\Le$ in $\mathcal{H}$ is to take any real-valued L\'evy
process $L$ and multiply it with a self-adjoint operator $\mathcal{U}\in\mathcal{H}$, i.e., $\Le(t)=L(t)\mathcal{U}$. 
For $\mathcal{S},\mathcal{T}\in\mathcal{H}$,
\begin{align*}
\E\left[\langle\Le(t),\mathcal{S}\rangle_{\mathcal{H}}\langle\Le(t),\mathcal{T}\rangle_{\mathcal{H}}\right]&=\E[L^2(t)]\langle\mathcal{U},\mathcal{S}\rangle_{\mathcal{H}}
\langle\mathcal{U},\mathcal{T}\rangle_{\mathcal{H}}
=t\text{Var}(L(1))\langle\mathcal{U}^{\otimes 2}\mathcal{S},\mathcal{T}\rangle_{\mathcal{H}}\,.
\end{align*}
Thus, the covariance operator for this L\'evy process becomes $\mathbb{Q}_{\Le}=\mathcal{U}^{\otimes 2}$, i.e., the tensor product of $\mathcal{U}$ with itself. We show that $\mathbb{Q}_{\Le}$
is a self-adjoint, non-negative definite trace class operator. Indeed, it is obviously linear and 
$$
\|\mathbb{Q}_{\Le}\mathcal{T}\|_{\mathcal{H}}=\|\mathcal{U}\langle\mathcal{U},\mathcal{T}\rangle_{\mathcal{H}}\|_{\mathcal{H}}\leq \|\mathcal{U}\|^2_{\mathcal{H}}\|\mathcal{T}\|_{\mathcal{H}}\,,
$$ 
which shows $\mathbb{Q}_{\Le}\in L(\mathcal{H})$. Moreover,
$$
\langle\mathbb{Q}_{\Le}\mathcal{S},\mathcal{T}\rangle_{\mathcal{H}}=\langle\mathcal{U},\mathcal{S}\rangle_{\mathcal{H}}\langle\mathcal{U},\mathcal{T}\rangle_{\mathcal{H}}
=\langle\mathcal{S},\mathcal{U}^{\otimes 2}\mathcal{T}\rangle_{\mathcal{H}}=\langle\mathcal{S},\mathbb{Q}_{\Le}\mathcal{T}\rangle_{\mathcal{H}}
$$
and
$$
\langle\mathbb{Q}_{\Le}\mathcal{S},\mathcal{S}\rangle_{\mathcal{H}}=\langle\mathcal{U},\mathcal{S}\rangle_{\mathcal{H}}^2\geq 0\,,
$$
which show that $\mathbb{Q}_{\Le}$ is a self-adjoint and non-negative definite operator on $\mathcal{H}$. Finally, for an orthonormal basis $\{\mathcal{T}_n\}_{n\in\N}$ in $\mathcal{H}$,
$$
\tr(\mathbb{Q}_{\Le})=\sum_{n=1}^{\infty}\langle\mathbb{Q}_{\Le}\mathcal{T}_n,\mathcal{T}_n\rangle_{\mathcal{H}}=\sum_{n=1}^{\infty}\langle\mathcal{U},\mathcal{T}_n\rangle_{\mathcal{H}}^2=\|\mathcal{U}\|_{\mathcal{H}}^2
$$
where we used Parseval's identity. Hence, $\mathbb{Q}_{\Le}$ is trace class. Of course, if we add the assumption that $\mathcal{U}$ is positive
definite and $L(t)$ is taking values on $\R_+$,\footnote{This means that the L\'evy process is a so-called subordinator, that is, a process with 
only positive jumps and non-negative drift.} it follows that 
$$
(\Le(t)f,f)_H=L(t)(\mathcal{U}f,f)_H\geq 0\,,
$$
for any $f\in H$, and thus $\Le(t)$ is non-negative definite. 
\end{example}
This simple example of an operator-valued L\'evy process $\Le$ brings us to the question of non-negative definiteness of $\Y$, which we investigate next.
First, let us define what we mean by non-decreasing paths of $\Le$:
\begin{definition}
We say that the $\mathcal{H}$-valued L\'evy process $\Le$ has {\it non-decreasing paths} if $\Le(t)$ is self-adjoint and 
$t\mapsto(\Le(t)f,f)_H$ is non-decreasing in $t\geq 0$ for every $f\in H$, $a.s$. 
\end{definition}
Note that as $\Le(0)=0$ by definition of the L\'evy process, the non-decreasing paths property implies $(\Le(t)f,f)_H\geq 0$ for every $t\geq 0$, $a.s.$. But then it follows
that $\Le(t)$ is a non-negative definite operator. In fact, something slightly stronger holds:
 \begin{lemma}
\label{lem:pos-def-L}
Assume $\Le$ is an $\mathcal{H}$-valued L\'evy process with non-decreasing paths. Then $\Le(t)-\Le(s)$ is $a.s$ non-negative definite for
every $t>s\geq 0$.
\end{lemma}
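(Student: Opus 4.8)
The plan is to reduce the claim — which asserts that, on a single event of full measure, the quadratic form $f\mapsto((\Le(t)-\Le(s))f,f)_H$ is non-negative for \emph{every} $f\in H$ simultaneously — to the hypothesis, which for each \emph{fixed} $f$ only controls this form outside an $f$-dependent null set. The bridge between the two is the separability of $H$ together with the continuity of the quadratic form; this is also where the only real difficulty lies, since one may not naively take the union of the uncountable family of null sets indexed by $f\in H$.

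First I would fix $t>s\geq 0$. For a fixed $f\in H$, the assumption that the path $u\mapsto(\Le(u)f,f)_H$ is non-decreasing a.s.\ gives
$$
((\Le(t)-\Le(s))f,f)_H=(\Le(t)f,f)_H-(\Le(s)f,f)_H\geq 0
$$
off some null set $N_f$. To remove the dependence on $f$, I would use that $H$ is separable: fix a countable dense subset $\{f_k\}_{k\in\N}\subset H$ and set $N:=\bigcup_{k\in\N}N_{f_k}$, which is again a null set. On $\Omega\setminus N$ we then have $((\Le(t)-\Le(s))f_k,f_k)_H\geq 0$ for all $k\in\N$ at once.

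Finally I would upgrade this from the dense set to all of $H$ by continuity. The increment $\Le(t)-\Le(s)$ is an element of $\mathcal{H}$, in particular a bounded operator, so $f\mapsto((\Le(t)-\Le(s))f,f)_H$ is continuous (indeed Lipschitz on bounded sets, via $(T f,f)-(T g,g)=(Tf,f-g)+(T(f-g),g)$ with $T:=\Le(t)-\Le(s)$). Given an arbitrary $f\in H$, choosing $f_{k_j}\to f$ from the dense family and passing to the limit in the inequalities above yields $((\Le(t)-\Le(s))f,f)_H\geq 0$ on $\Omega\setminus N$. Since $\Le(t)-\Le(s)$ is self-adjoint, being the difference of the self-adjoint operators $\Le(t)$ and $\Le(s)$, this exhibits $\Le(t)-\Le(s)$ as a non-negative definite operator on the full-measure event $\Omega\setminus N$, which is the assertion.
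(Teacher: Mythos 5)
Your proposal is correct, and its core is the same one--line computation that constitutes the paper's entire proof: for fixed $t>s\geq 0$ and $f\in H$, write $((\Le(t)-\Le(s))f,f)_H=(\Le(t)f,f)_H-(\Le(s)f,f)_H$ and invoke the non-decreasing path property. The difference lies in what you do afterwards. The paper stops at that point, implicitly reading the definition of non-decreasing paths as furnishing a single full-measure event on which $t\mapsto(\Le(t)f,f)_H$ is monotone for \emph{every} $f$ simultaneously; under that reading nothing more is needed. You instead adopt the weaker reading --- for each fixed $f$ an $f$-dependent null set $N_f$ --- and supply the bridge the paper omits: separability of $H$ yields a countable dense family $\{f_k\}$ with common null set $N=\bigcup_k N_{f_k}$, and continuity of the quadratic form $f\mapsto(Tf,f)_H$ for the bounded operator $T=\Le(t)-\Le(s)$ (your identity $(Tf,f)_H-(Tg,g)_H=(Tf,f-g)_H+(T(f-g),g)_H$ is correct, and Hilbert--Schmidt operators are indeed bounded) extends the inequality from the dense family to all of $H$ on $\Omega\setminus N$; self-adjointness of the increment follows from self-adjointness of $\Le(t)$ and $\Le(s)$. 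What your extra layer buys is robustness: the lemma then holds under the pointwise-in-$f$ interpretation of the hypothesis, under which the paper's one-liner would have a genuine gap, since one cannot take an uncountable union of null sets. What the paper's version buys is brevity, at the cost of resting on the stronger reading of the definition.
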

\begin{proof}
For $t>s\geq 0$, we have for $f\in H$
$$
((\Le(t)-\Le(s))f,f)_H=(\Le(t)f,f)_H-(\Le(s)f,f)_H
$$
which is non-negative $a.s$ by the non-decreasing path property of $t\mapsto(\Le(t)f,f)_H$. The assertion follows.
\end{proof}
As we shall see, this monotonicity property of the "paths" of $\Le$ is exactly what we need in order to show that $\Y(t)$ is non-negative definite
for every $t\geq 0$.  But first, let us do some analysis of L\'evy processes in $\mathcal{H}$ with non-decreasing paths.

Define for the moment $L_f(t):=(\Le(t)f,f)_H$ for given $f\in H$. We show that this is a L\'evy process on the real line. To this end, consider
the functional $\mathcal{F}_f:\mathcal{H}\rightarrow\R$ defined as $\mathcal{F}_f(\mathcal{T})=(\mathcal{T}f,f)_H$. This is 
obviously linear, and since
$$
|\mathcal{F}_f(\mathcal{T})|=|(\mathcal{T}f,f)_H|\leq|\mathcal{T}f|_H|f|_H\leq\|\mathcal{T}|_{\text{op}}|f|^2_H\,,
$$
we have $\mathcal{F}_f\in\mathcal{H}^*$. Hence, there exists a unique element in $\mathcal{H}$, which we also
denote by $\mathcal{F}_f$, 
$$
\mathcal{F}_f(\mathcal{T})=\langle\mathcal{T},\mathcal{F}_f\rangle_{\mathcal{H}}\,.
$$
In the following we identify Hilbert-Schmidt operators on $H$ with $H\otimes H$. Similarly, the Hilbert-Schmidt operator $h^*\otimes h$ for $h\in H$ is written as $h\otimes h$. Then for any Hilbert-Schmidt operator $\mathcal{V}$ we have the following identity
\[ \langle \mathcal{V},h\otimes h\rangle_{\mathcal{H}} = (\mathcal{V}h,h)_{H}. \]
Thus, we have $\mathcal{F}_f=f\otimes f$. Indeed, a straightforward calculation shows,
\begin{align*}
\|f\otimes f\|_{\mathcal{H}}^2&=\sum_{n=1}^{\infty}((f\otimes f)e_n,(f\otimes f)e_n)_H \\
&=\sum_{n=1}^{\infty}((f,e_n)_Hf,(f,e_n)_Hf)_H \\
&=|f|_H^2\sum_{n=1}^{\infty}(f,e_n)_H^2 \\
&=|f|^4_H\,.
\end{align*}
Hence, $f\otimes f\in\mathcal{H}$ with norm $\|f\otimes f\|_{\mathcal{H}}=|f|_H^2$. Furthermore,
\begin{align*}
\langle\mathcal{T},f\otimes f\rangle_{\mathcal{H}}&=\sum_{n=1}^{\infty}(\mathcal{T}e_n,(f\otimes f)e_n)_H \\
&=\sum_{n=1}^{\infty}(\mathcal{T}e_n,(f,e_n)_Hf)_H \\
&=\sum_{n=1}^{\infty}(\mathcal{T}(f,e_n)_He_n,f)_H \\
&=(\mathcal{T}f,f)_H\,.
\end{align*}
By definition of an $\mathcal{H}$-valued L\'evy process, $t\mapsto\langle\Le(t),\mathcal{T}\rangle_{\mathcal{H}}$ is
a real-valued L\'evy process for any $\mathcal{T}\in\mathcal{H}$. 
Therefore, in particular, $L_f(t)=(\Le(t)f,f)_H$ is a real-valued L\'evy process by choosing $\mathcal{T}=f\otimes f$. If, furthermore, $\Le$ has non-decreasing
paths, it follows that $L_f$ is a L\'evy process with non-decreasing paths, i.e., a subordinator. We have the property of the continuous martingale part of
$\Le$:

\begin{proposition}\label{prop:zeroMGpart}
  Let $\Le$ be the L\'evy process defined after \eqref{dyn-Y} with non-decreasing paths, and denote the covariance operator of the continuous martingale part by $\mathbb{Q}_{\Le}^0$. Let $\mathcal{T}$ be a symmetric Hilbert-Schmidt operator. Then $\mathbb{Q}_{\Le}^0\mathcal{T} = 0$, that is, 
$\mathcal{T}\in\ker(\mathbb{Q}_{\Le}^0)$.
\end{proposition}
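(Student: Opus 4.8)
The plan is to reduce the statement to the scalar quadratic form of $\mathbb{Q}_{\Le}^0$ evaluated at the rank-one symmetric operators $f\otimes f$, to exploit that the real projections $L_f$ are subordinators, and then to propagate the resulting degeneracy to all symmetric Hilbert--Schmidt operators via the spectral theorem. Concretely, since $\mathbb{Q}_{\Le}^0$ is self-adjoint and non-negative definite it factors as $\mathbb{Q}_{\Le}^0=R^*R$ with $R=(\mathbb{Q}_{\Le}^0)^{1/2}$, so that $\langle\mathbb{Q}_{\Le}^0\mathcal{S},\mathcal{S}\rangle_{\mathcal{H}}=\|R\mathcal{S}\|_{\mathcal{H}}^2$ for every $\mathcal{S}\in\mathcal{H}$. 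Hence it suffices to show that this quadratic form vanishes at each $\mathcal{S}=f\otimes f$, since then $R(f\otimes f)=0$ and therefore $\mathbb{Q}_{\Le}^0(f\otimes f)=R^*R(f\otimes f)=0$.

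First I would invoke the L\'evy--It\^o decomposition of the $\mathcal{H}$-valued L\'evy process, $\Le(t)=\mathcal{D}t+\mathcal{W}(t)+J(t)$, where $\mathcal{W}$ is an $\mathcal{H}$-valued Wiener process with covariance operator $\mathbb{Q}_{\Le}^0$ and $J$ collects the jumps (see Peszat and Zabczyk~\cite{PZ}). Fixing $f\in H$ and applying the continuous linear functional $\langle\,\cdot\,,f\otimes f\rangle_{\mathcal{H}}$, one obtains the real L\'evy--It\^o decomposition of $L_f(t)=\langle\Le(t),f\otimes f\rangle_{\mathcal{H}}=(\Le(t)f,f)_H$, in which the continuous Gaussian martingale part is precisely $\langle\mathcal{W}(t),f\otimes f\rangle_{\mathcal{H}}$, a real Brownian motion with variance $t\,\langle\mathbb{Q}_{\Le}^0(f\otimes f),f\otimes f\rangle_{\mathcal{H}}$. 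As established just before the statement, $L_f$ is a subordinator, i.e.\ a real L\'evy process with non-decreasing paths; such a process is of finite variation and hence has no Gaussian component. By uniqueness of the real L\'evy--It\^o decomposition its continuous martingale part must vanish, forcing $\langle\mathbb{Q}_{\Le}^0(f\otimes f),f\otimes f\rangle_{\mathcal{H}}=0$ and therefore $\mathbb{Q}_{\Le}^0(f\otimes f)=0$ for every $f\in H$.

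To finish, I would pass from the rank-one symmetric operators to a general symmetric $\mathcal{T}\in\mathcal{H}$. Being self-adjoint and Hilbert--Schmidt, $\mathcal{T}$ is compact and admits a spectral decomposition $\mathcal{T}=\sum_{n}\lambda_n\,e_n\otimes e_n$ with real eigenvalues $\lambda_n$ and an orthonormal system $\{e_n\}$, the series converging in $\|\cdot\|_{\mathcal{H}}$. Since $\mathbb{Q}_{\Le}^0$ is bounded and annihilates each $e_n\otimes e_n$ by the previous step, continuity yields $\mathbb{Q}_{\Le}^0\mathcal{T}=\sum_n\lambda_n\,\mathbb{Q}_{\Le}^0(e_n\otimes e_n)=0$, which is exactly the claim $\mathcal{T}\in\ker(\mathbb{Q}_{\Le}^0)$.

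The step I expect to be the main obstacle is the compatibility of the infinite-dimensional L\'evy--It\^o decomposition with the scalar functional $\langle\,\cdot\,,f\otimes f\rangle_{\mathcal{H}}$: one must argue carefully that applying a continuous linear functional sends the continuous (Gaussian) martingale part of $\Le$ to the continuous martingale part of $L_f$ and the jump part to the jump part, so that the vanishing of the Gaussian part of the subordinator $L_f$ can indeed be read off as the vanishing of $\langle\mathbb{Q}_{\Le}^0(f\otimes f),f\otimes f\rangle_{\mathcal{H}}$. This is precisely where uniqueness of the decomposition and the identification of the projected covariance $t\,\langle\mathbb{Q}_{\Le}^0(f\otimes f),f\otimes f\rangle_{\mathcal{H}}$ are used; the remaining quadratic-form factorisation and the spectral argument are routine.
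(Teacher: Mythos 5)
Your proof is correct, and its first half coincides with the paper's: both reduce to the rank-one operators $f\otimes f$, observe that $L_f(t)=(\Le(t)f,f)_H=\langle\Le(t),f\otimes f\rangle_{\mathcal{H}}$ is a subordinator and hence has no Gaussian component, and conclude that $\langle\mathbb{Q}_{\Le}^0(f\otimes f),f\otimes f\rangle_{\mathcal{H}}=\|(\mathbb{Q}_{\Le}^0)^{1/2}(f\otimes f)\|_{\mathcal{H}}^2=0$, whence $\mathbb{Q}_{\Le}^0(f\otimes f)=0$. Where you genuinely diverge is the passage from rank-one operators to a general symmetric $\mathcal{T}$. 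The paper fixes an orthonormal basis $\{e_k\}$ of $H$, invokes Lemma~\ref{lem:app1} to write $\mathcal{T}=\sum_{k,l}\gamma_{k,l}\,e_k\otimes e_l$ with $\gamma_{k,l}=\gamma_{l,k}$, and then uses the polarization identity $e_k\otimes e_l+e_l\otimes e_k=(e_k+e_l)\otimes(e_k+e_l)-e_k\otimes e_k-e_l\otimes e_l$ to express $\mathcal{T}$ through operators of the form $g\otimes g$, to which the first step applies. You instead diagonalize: a symmetric Hilbert--Schmidt operator is compact and self-adjoint, so the spectral theorem gives $\mathcal{T}=\sum_n\lambda_n\,e_n\otimes e_n$ for an orthonormal system of eigenvectors with $\sum_n\lambda_n^2<\infty$; since the operators $e_n\otimes e_n$ are orthonormal in $\mathcal{H}$, the series converges in $\|\cdot\|_{\mathcal{H}}$, and boundedness of $\mathbb{Q}_{\Le}^0$ then annihilates it term by term. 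Your route is shorter and dispenses with both the polarization trick and the appendix lemma, at the price of invoking the spectral theorem; the paper's route is more elementary, needing only the coordinate representation of $\mathcal{T}$ in a fixed basis. A further, cosmetic, difference: you obtain the vanishing of the Gaussian part from the pathwise L\'evy--It\^o decomposition together with the finite variation of subordinators, while the paper reads it off the L\'evy--Khintchine exponent; these are equivalent formulations of the fact that a subordinator has no continuous martingale part.
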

\begin{proof}
Let first $\mathcal{T} = f\otimes f$ with $f\in H$. Then the continuous martingale part of the characteristic function of 
$L_f(t)=\langle\Le(t),\mathcal{F}_f\rangle_{\mathcal{H}}$ is $\langle\mathbb{Q}_{\Le}^0\mathcal{F}_f,\mathcal{F}_f\rangle_{\mathcal{H}}$, which must be zero
due to the non-decreasing paths of $L_f(t)$. But then
$$
\langle\mathbb{Q}_{\Le}^0\mathcal{F}_f,\mathcal{F}_f\rangle_{\mathcal{H}}=\|(\mathbb{Q}_{\Le}^0)^{1/2}\mathcal{F}_f\|_{\mathcal{H}}^2=0\,,
$$
and thus $\mathcal{F}_f$ is in the kernel of $(\mathbb{Q}_{\Le}^0)^{1/2}$. As it holds,
$$
\mathbb{Q}_{\Le}^0\mathcal{F}_f=(\mathbb{Q}_{\Le}^0)^{1/2}(\mathbb{Q}_{\Le}^0)^{1/2}\mathcal{F}_f=(\mathbb{Q}_{\Le}^0)^{1/2}0=0\,,
$$
we can conclude that $\mathcal{F}_f\in\ker(\mathbb{Q}_{\Le}^0)$.

Now let $\mathcal{T}$ be a symmetric Hilbert-Schmidt operator as in the proposition. It can be shown that $\mathcal{T}$ must be of the form
\begin{align*}
  \mathcal{T} = \sum_{k,l\in\N} \gamma_{k,l} e_k\otimes e_l,
\end{align*}
with $\sum_{k,l} \gamma_{k,l}^2<\infty$ and $\gamma_{k,l} = \gamma_{l,k}$, see Lemma \ref{lem:app1} for a sketch of the arguments. Therefore we can write
\begin{align*}
	\mathcal{T}	& = \sum_{k\in\N} \gamma_{k,k} e_k\otimes e_k + 2\sum_{k\in\N}\sum_{l<k} \gamma_{k,l} (e_k\otimes e_l + e_l\otimes e_k) \\
		& = \sum_{k\in\N} \gamma_{k,k} e_k\otimes e_k + 2\sum_{k\in\N}\sum_{l<k} \gamma_{k,l} \big((e_k+e_l)\otimes(e_k\otimes e_l) - e_k\otimes e_k - e_l\otimes e_l\big).
\end{align*}
With this we compute
\begin{align*}
 \mathbb{Q}_{\Le}^0\mathcal{T} &= \sum_{k\in\N} \gamma_{k,k}\mathbb{Q}_{\Le}^0(e_k\otimes e_k) \\
&\qquad+ 2\sum_{k\in\N}\sum_{l<k}\gamma_{k,l} \big( \mathbb{Q}_{\Le}^0((e_k+e_l)\otimes (e_k + e_l)) - \mathbb{Q}_{\Le}^0(e_k\otimes e_k) - \mathbb{Q}_{\Le}^0 (e_l\otimes e_l)\big),
\end{align*}
which ends the proof since $\mathbb{Q}_{\Le}^0$ applied to a symmetric operator is zero by the first part of the proof.
\end{proof}

As the space of symmetric Hilbert-Schmidt operators does not span
$\mathcal{H}$, we cannot conclude that $\mathbb{Q}_{\Le}^0=0$, i.e., that $\Le$ does not have a continuous martingale part. 
Recall that subordinators on $\R$ do not have any continuous martingale part.

\medskip

Denote now by $\mathcal{H}_+$ the convex cone of non-negative definite operators in $\mathcal{H}$. 
\begin{proposition}
\label{prop:Y-pos-def}
Assume that $\C(\mathcal{H}_+)\subset\mathcal{H}_+$. If $\Le(t)$ is an $\mathcal{H}$-valued L\'evy process with non-decreasing paths and 
$\Y_0$ is non-negative definite, then $\Y$ is non-negative definite. 
\end{proposition}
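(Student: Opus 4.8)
The plan is to work from the mild solution \eqref{sol-Y}, $\Y(t)=\Sg(t)\Y_0+\int_0^t\Sg(t-s)\,d\Le(s)$, and to show that each of the two summands lies almost surely in the closed convex cone $\mathcal{H}_+$; since $\mathcal{H}_+$ is closed under addition, this yields $\Y(t)\in\mathcal{H}_+$. Two structural facts about $\mathcal{H}_+$ will be used repeatedly, and I would record them first: $\mathcal{H}_+$ is a convex cone (closed under sums and under multiplication by non-negative scalars), and it is \emph{closed} in $\mathcal{H}$, because Hilbert--Schmidt convergence $\mathcal{T}_n\to\mathcal{T}$ forces $(\mathcal{T}_nf,f)_H\to(\mathcal{T}f,f)_H$ for every $f\in H$, so non-negativity (and self-adjointness) survives the limit.

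First I would show that the semigroup preserves the cone, i.e. $\Sg(t)(\mathcal{H}_+)\subset\mathcal{H}_+$ for all $t\geq0$. Since the identity maps $\mathcal{H}_+$ into itself and $\C(\mathcal{H}_+)\subset\mathcal{H}_+$ by hypothesis, an induction gives $\C^k(\mathcal{H}_+)\subset\mathcal{H}_+$ for every $k\geq0$. For $\mathcal{T}\in\mathcal{H}_+$ the partial sums $\sum_{k=0}^{N}\frac{t^k}{k!}\C^k\mathcal{T}$ are then non-negative combinations of elements of $\mathcal{H}_+$, hence lie in $\mathcal{H}_+$; letting $N\to\infty$ they converge in $\mathcal{H}$ to $\Sg(t)\mathcal{T}=\exp(t\C)\mathcal{T}$, and closedness of $\mathcal{H}_+$ gives $\Sg(t)\mathcal{T}\in\mathcal{H}_+$. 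In particular $\Sg(t)\Y_0\in\mathcal{H}_+$ whenever $\Y_0\in\mathcal{H}_+$, which settles the first summand.

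The core of the argument is the stochastic convolution $\int_0^t\Sg(t-s)\,d\Le(s)$, which I would approximate by Riemann--Stieltjes sums. For a partition $0=s_0<\dots<s_n=t$ consider $R_n:=\sum_{i=0}^{n-1}\Sg(t-s_i)\big(\Le(s_{i+1})-\Le(s_i)\big)$. By Lemma~\ref{lem:pos-def-L} each increment $\Le(s_{i+1})-\Le(s_i)$ is almost surely in $\mathcal{H}_+$, the first step shows $\Sg(t-s_i)$ keeps it in $\mathcal{H}_+$, and the convex-cone property places the whole sum $R_n$ in $\mathcal{H}_+$ almost surely. It then remains to pass to the limit: because $s\mapsto\Sg(t-s)$ is deterministic and continuous in operator norm on $[0,t]$, the It\^o isometry for $\mathcal{H}$-valued integrals against the square-integrable L\'evy process $\Le$ (the construction in Peszat and Zabczyk~\cite{PZ}, after splitting $\Le$ into its mean $s\mapsto s\,\E[\Le(1)]$ and a mean-zero square-integrable martingale) guarantees that $R_n\to\int_0^t\Sg(t-s)\,d\Le(s)$ in $L^2(\Omega;\mathcal{H})$ as the mesh tends to zero. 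Extracting an almost surely convergent subsequence, and using that the corresponding countably many events $\{R_n\in\mathcal{H}_+\}$ hold simultaneously almost surely, closedness of $\mathcal{H}_+$ forces the limit into $\mathcal{H}_+$ almost surely.

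I expect the main obstacle to be precisely this last convergence step: one must make rigorous that the stochastic convolution is the $L^2$-limit of the positive Riemann sums $R_n$ and then transfer membership in the cone through an almost sure subsequential limit. The algebraic ingredients (cone preservation by $\C$, hence by $\Sg(t)$, and positivity of the increments from Lemma~\ref{lem:pos-def-L}) are straightforward; the analytic care lies in the approximation and in verifying that $\mathcal{H}_+$ is closed, so that neither non-negativity nor self-adjointness is lost in the limit.
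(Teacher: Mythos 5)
Your proposal is correct and follows essentially the same route as the paper's own proof: decompose the mild solution \eqref{sol-Y}, show the semigroup $\Sg(t)$ preserves $\mathcal{H}_+$ via the exponential series and the hypothesis on $\C$, and obtain positivity of the stochastic convolution as an $L^2$-limit of sums $\sum_m\Sg(t-s_m)\Delta\Le(s_m)$ whose terms lie in $\mathcal{H}_+$ by Lemma~\ref{lem:pos-def-L}. The only difference is that you make explicit two details the paper leaves implicit (closedness of the cone $\mathcal{H}_+$ in $\mathcal{H}$, and the passage to an almost surely convergent subsequence), which is a welcome tightening rather than a departure.
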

\begin{proof}
Recall that
$$
\Y(t)=\Sg(t)\Y_0+\int_0^t\Sg(t-s)\,\Le(s)\,.
$$
It holds,
$$
\Sg(t)\Y_0=\e^{t\C}\Y_0=\sum_{k=0}^{\infty}\frac{t^k}{k!}\mathcal{\C}^k\Y_0\,,
$$
which is then a non-negative definite operator by the assumption on $\C$. 

Next, we know that $\int_0^t\Sg(t-s)\,d\Le(s)$ is defined as the strong limit of 
$\sum_{m=1}^{M}\Sg(t-s_m)\,\Delta\Le(s_m)$ in $L^2(\Omega\times[0,t];\mathcal{H})$. Here, $\{s_m\}_{m=1}^{M}$ is a 
nested partition of $[0,t]$, and $\Delta\Le(s_m):=\Le(s_{m+1})-\Le(s_m)$ is an increment of $\Le$. 
But  $\Delta\Le(s_m)$ is non-negative definite $a.s.$ by Lemma~\ref{lem:pos-def-L}, and therefore each term in the sum above is positive,
$a.s.$, since $\Sg$ preserves non-negative definiteness by assumption on $\C$. Hence it follows that $\int_0^t\Sg(t-s)\,d\Le(s)$ is 
non-negative definite $a.s.$,
and the proof is complete.
\end{proof}
From Proposition  \ref{prop:Y-sym} and Proposition \ref{prop:Y-pos-def} it follows that under the assumptions
\begin{enumerate} 
\item[a)] $(\C\mathcal{T})^*=\C\mathcal{T}^*$, 
\item[b)] $\C(\mathcal{H}_+)\subset\mathcal{H}_+$, and
\item[c)] $\Le(t)$ is a self-adjoint and non-negative definite square-integrable L\'evy process with 
 values in $\mathcal{H}$,
\end{enumerate}
then $\Y(t)$ becomes a self-adjoint,
non-negative definite square integrable process with values in $\mathcal{H}$ as long as the initial condition $\Y_0$ is self-adjoint
and non-negative definite. Hence, we have a unique square root, $\Y^{1/2}(t)$ for every $t\geq 0$. We shall
use this to model the stochastic volatility. 

\begin{lemma}
It holds that $\C_1(\mathcal{H}_+)\subset\mathcal{H}_+$.
\end{lemma}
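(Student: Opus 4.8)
The plan is to reduce the statement to the elementary fact that conjugating a non-negative definite operator by $\mathcal{C}$ and $\mathcal{C}^*$ preserves non-negative definiteness. Fix $\mathcal{T}\in\mathcal{H}_+$; by definition $\mathcal{T}$ is a self-adjoint, non-negative definite Hilbert--Schmidt operator on $H$. I would first record that $\C_1\mathcal{T}=\mathcal{C}\mathcal{T}\mathcal{C}^*$ again lies in $\mathcal{H}$ by Lemma~\ref{lemma:basic-prop-C}, and that it is self-adjoint: the same lemma gives $(\C_1\mathcal{T})^*=\C_1\mathcal{T}^*=\C_1\mathcal{T}$, using $\mathcal{T}=\mathcal{T}^*$. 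So membership in $\mathcal{H}$ and self-adjointness come for free, and it only remains to verify the non-negative definiteness.

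For this I would take an arbitrary $f\in H$ and compute, using the definition \eqref{def-C1} of $\C_1$ together with the adjoint relation $(\mathcal{C}x,f)_H=(x,\mathcal{C}^*f)_H$,
\[
(\C_1\mathcal{T}f,f)_H=(\mathcal{C}\mathcal{T}\mathcal{C}^*f,f)_H=(\mathcal{T}\mathcal{C}^*f,\mathcal{C}^*f)_H=(\mathcal{T}g,g)_H,
\]
where $g:=\mathcal{C}^*f\in H$. Since $\mathcal{T}$ is non-negative definite, the right-hand side is $\geq 0$, and as $f\in H$ was arbitrary this shows that $\C_1\mathcal{T}$ is non-negative definite. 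Combined with the self-adjointness and the membership in $\mathcal{H}$ noted above, we conclude $\C_1\mathcal{T}\in\mathcal{H}_+$, which is exactly the claim.

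There is essentially no serious obstacle here; the computation is a one-line application of the adjoint identity. The only point requiring a little care is to be explicit that membership in $\mathcal{H}_+$ demands self-adjointness \emph{in addition to} non-negative definiteness, so that the appeal to Lemma~\ref{lemma:basic-prop-C} for self-adjointness is genuinely part of the argument rather than cosmetic. I would therefore state the two requirements at the outset and dispatch each in turn, which keeps the proof transparent and complete.
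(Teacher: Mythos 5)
Your proof is correct and takes essentially the same route as the paper: the core is the identical one-line computation $(\C_1\mathcal{T}f,f)_H=(\mathcal{C}\mathcal{T}\mathcal{C}^*f,f)_H=(\mathcal{T}\mathcal{C}^*f,\mathcal{C}^*f)_H\geq 0$. Your additional explicit check of self-adjointness via Lemma~\ref{lemma:basic-prop-C} is a harmless supplement; the paper omits it because $\mathcal{H}_+$ is there defined simply as the cone of non-negative definite operators in $\mathcal{H}$.
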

\begin{proof}
We recall the definition of $\C_1$ in \eqref{def-C1}. Let $\mathcal{T}\in\mathcal{H}_+$. Then, for any $f\in H$
$$
(\C_1\mathcal{T}f,f)_H=(\mathcal{C}\mathcal{T}\mathcal{C}^*f,f)_H=(\mathcal{T}\mathcal{C}^*f,\mathcal{C}^*f)_H\geq 0\,.
$$
Hence, the result follows.
\end{proof}
In fact, for $\C_2$ we cannot prove that it preserves the property of non-negative definiteness. But recalling the proof of Prop.~\ref{prop:Y-pos-def}, it is indeed the 
associated semigroup of $\C$ that must be non-negative definite. As we have that $\Sg_2(t)\mathcal{T}=\exp(t\mathcal{C})\mathcal{T}\exp(t\mathcal{C}^*)$ from
Lemma~\ref{lem:C-semigroup}, it follows that $\Sg_2(t)(\mathcal{H}_+)\subset\mathcal{H}_+$, and we can conclude that $\Y$ with $\C=\C_2$ is also
non-negative definite whenever $\Le$ has non-decreasing paths and $\Y_0$ is non-negative definite. Indeed, by inspection of the proof
of Prop.~\ref{prop:Y-pos-def}, we can substitute the condition b) $\C(\mathcal{H}_+)\subset\mathcal H_+$ on $\C$ with the 
condition 
\begin{enumerate}
\item[b')] $\quad \Sg(t)(\mathcal{H}_+)\subset\mathcal{H}_+, \quad t\geq 0\,.$ 
\end{enumerate}
In conclusion, if we use $\C=\C_i$ for either $i=1$ or $i=2$ in the definition of the volatility process $\Y$, we obtain a non-negative definite operator under appropriate conditions
on $\Le$ and $\Y_0$. We recall that the choice $\C=\C_2$ can be seen as the analogue of the matrix-valued volatility model by Barndorff-Nielsen and Stelzer~\cite{BNStelzer}.

\medskip
Let us discuss the particular case when $\Le$ is a compound Poisson process. To this end, we define 
\begin{equation}\label{eqcP}
\Le(t)=\sum_{i=1}^{N(t)}\X_i\,,
\end{equation} 
where $N$ is a real-valued Poisson process with intensity $\lambda>0$ and $\{\X_i\}_{i\in\mathbb{N}}$ are $i.i.d.$ square-integrable 
$\mathcal{H}$-valued random variables.  Note that for $f\in H$, we find from the linearity of the inner product
\begin{align*}
\langle\Le(t),f\otimes f\rangle_{\mathcal{H}}&=\langle\sum_{i=1}^{N(t)}\X_i,f\otimes f\rangle_{\mathcal{H}} 
=\sum_{i=1}^{N(t)}\langle\X_i,f\otimes f\rangle_{\mathcal{H}} 
=\sum_{i=1}^{N(t)}(\X_if,f)_H\,.
\end{align*}
Hence, $L_f(t):=\langle\Le(t),f\otimes f\rangle_{\mathcal{H}}$ is a real-valued compound Poisson process with jumps
given by the $i.i.d$ random variables $(\X_i f,f)_H$. The process $L_f(t)$ has non-decreasing paths if and only if $\X$ is self-adjoint and 
$(\X f,f)_H$ is distributed on $\R_+$, where the latter holds if and only if $\X$ is non-negative definite, i.e., $\X\in\mathcal{H}_+$. Next, introduce the map
$\phi_f:\mathcal{H}_+\rightarrow\R_+$ by
$$
\phi_f(\mathcal{Z})=\langle\mathcal{Z},f\otimes f\rangle_{\mathcal{H}}\,.
$$ 
For any Borel set $A\subset\R_+$, we define $P_{\phi_f}(A):=P_{\X}(\phi_f^{-1}(A))$ where $P_{\X}$ is the law of $\X$. But then
$$
\int_{\R_+}\e^{\mathrm{i}uz}P_{\phi_f}(dz)=\int_{\mathcal{H}_+}(\e^{\mathrm{i}u\cdot}\circ\phi_f)(\mathcal{Z})P_{\X}(d\mathcal{Z})
=\int_{\mathcal{H}_+}\e^{\mathrm{i}u\langle\mathcal{Z},f\otimes f\rangle_{\mathcal{H}}}P_{\X}(d\mathcal{Z})\,,
$$ 
and $P_{X_f}(A)=P_{\X}(\phi_f^{-1}(A))$ with $P_{X_f}$ being the law of $X_f:=(\X f,f)_H$. 

Suppose that $Z$ is an $H$-valued centred square-integrable Gaussian random variable with covariance operator
$\mathcal{Q}_Z$. Let $\mathcal{X}_i=Z_i^{\otimes 2}, i=1,2,\ldots,$, where $\{Z_i\}_{i\in\mathbb{N}}$ are independent
copies of $Z$. First, it is simple to see  
$Z^{\otimes2}$ is also a Hilbert-Schmidt operator, that is $Z^{\otimes 2}\in\mathcal{H}$, since
$$
\|Z^{\otimes 2}\|_{\mathcal{H}}=\sum_{n=1}^{\infty}|Z^{\otimes 2}e_n|_H^2=\sum_{n=1}^{\infty}(Z,e_n)_H^2|Z|^2_H=|Z|^4_H<\infty\,.
$$
This $\mathcal{H}$-valued random variable has expected ($\mathcal{H}$-valued) value $\E[Z^{\otimes 2}]=\mathcal{Q}_Z$, which can
be seen from the following calculation: given $\mathcal{T}\in\mathcal{H}$, then by linearity of the expectation operator
\begin{align*}
\langle\E\left[Z^{\otimes 2}\right],\mathcal{T}\rangle_{\mathcal{H}}&=\sum_{n=1}^{\infty}(\E[Z^{\otimes 2}]e_n,\mathcal{T}e_n)_H \\
&=\sum_{n=1}^{\infty}\E\left[(Z^{\otimes 2}e_n,\mathcal{T}e_n)_H\right] \\
&=\sum_{n=1}^{\infty}\E\left[(Z,e_n)_H(Z,\mathcal{T}e_n)_H\right] \\
&=\sum_{n=1}^{\infty}(\mathcal{Q}_Ze_n,\mathcal{T}e_n)_H \\
&=\langle \mathcal{Q}_Z,\mathcal{T}\rangle_{\mathcal{H}}\,.
\end{align*}
Furthermore, $Z^{\otimes 2}$ is self-adjoint and non-negative definite, since $(Z^{\otimes 2}f,f)_H=(Z,f)^2\geq 0$. From this we also see that the jumps
of $L_f(t)$, the compound Poisson process $\Le$ evaluated at $f\otimes f$, is given by $(Z,f)_H^2$, with $(Z,f)_H$ being a real valued 
centred Gaussian variable with variance $|\mathcal{Q}_Z^{1/2}f|^2_H$. Hence, $(Z,f)_H^2$ becomes Gamma distributed with
scale parameter $2|\mathcal{Q}_Z^{1/2}f|^2_H$ and shape parameter $1/2$. In fact, something much more general can be said about the
compound Poisson process $\Le$ for jumps given by $\mathcal{X}=Z^{\otimes 2}$. Indeed, if $\mathcal{T}\in\mathcal{H}$ is
self-adjoint, then it follows from Prop.~3 in Fraisse and Viguier-Pla~\cite{FVP} that the characteristic functional of 
$\langle Z^{\otimes 2},\mathcal{T}\rangle_{\mathcal{H}}$ is,
\begin{equation}
\E\left[\exp(\mathrm{i}\langle Z^{\otimes 2},\mathcal{T}\rangle_{\mathcal{H}})\right]=\left(\text{det}(I-2\mathrm{i}\mathcal{T}\mathcal{Q}_Z)\right)^{-1/2}\,.
\end{equation}  
Here, $I$ is the identity operator on $H$ and $\text{det}$ is the Fredholm determinant. We can interpret  $Z^{\otimes 2}$ as
being infinite dimensional Wishart distributed. By conditioning of $N(t)$ and appealing
to the independence of the jumps $\mathcal{X}_i$, we find the cumulant $\Psi_{\Le}$ of $\Le$ defined in $\eqref{cumulant-L}$ to be
\begin{equation}
\Psi_{\Le}(\mathcal{T})=\ln\E\left[\exp(\mathrm{i}\langle\Le(1),\mathcal{T}\rangle_{\mathcal{H}})\right]=
\lambda\left((\text{det}(I-2\mathrm{i}\mathcal{T}\mathcal{Q}_Z))^{-1/2}-1\right)\,,
\end{equation}
for any self-adjoint $\mathcal{T}\in\mathcal{H}$. 

Suppose now more in general  that  $Z$ is an $H$-valued centred square-integrable random variable. Then $Z$ has  a self-adjoint non-negative definite continuous linear  covariance operator
$\mathcal{Q}_Z$, too. Let $\mathcal{X}_i=Z_i^{\otimes 2}, i=1,2,\ldots,$, where $\{Z_i\}_{i\in\mathbb{N}}$ are independent copies of $Z$. Then by the same calculations as before $\|Z^{\otimes 2}\|_{\mathcal{H}}=|Z|^4_H<\infty$ and 
$\langle\E\left[Z^{\otimes 2}\right],\mathcal{T}\rangle_{\mathcal{H}}=\langle \mathcal{Q}_Z,\mathcal{T}\rangle_{\mathcal{H}}$. Also here $Z^{\otimes 2}$ is self-adjoint and non-negative definite, since $(Z^{\otimes 2}f,f)_H=(Z,f)^2\geq 0$ and  the jumps
of $L_f(t)$, the compound Poisson process $\Le$ evaluated at $f\otimes f$, is given by $(Z,f)_H^2$, with $(Z,f)_H$ being a real valued 
centred  variable with variance $|\mathcal{Q}_Z^{1/2}f|^2_H$.  The cumulant has then to be computed for each case separately.

\section{A volatility-modulated Ornstein-Uhlenbeck process}

Let $X$ be a stochastic process with values in $H$ given by the Ornstein-Uhlenbeck process
\begin{equation}
\label{X-dyn}
dX(t)=\mathcal{A}X(t)\,dt+\Y^{1/2}(t)\,dB(t)\,\qquad X(0)=X_0\,.
\end{equation}
Here, $B$ is a $H$-valued Wiener process with covariance operator $\mathcal{Q}$, which is a self-adjoint,
non-negative definite trace class operator on $H$. Furthermore, $X_0\in H$ and $\Y$ is given in \eqref{sol-Y}, being the 
solution of the dynamics \eqref{dyn-Y} from the previous section, 
where we assume that $\Y_0$ is self-adjoint, non-negative definite and $\Le$ is a $\mathcal{H}$-valued
L\'evy process with non-decreasing paths. We suppose that $(\C\mathcal T)^*=\C\mathcal T^*$ for every 
$\mathcal T\in\mathcal H$ and $\C(\mathcal{H}_+)\subset\mathcal{H}_+$ (or,
that the semigroup $\Sg(t)$ of $\C$ has this property). Then by Props.~\ref{prop:Y-sym} and \ref{prop:Y-pos-def}, $\Y(t)$ is
self-adjoint, non-negative definite, and we can define its square root $\Y^{1/2}(t)$.  Finally,
$\mathcal{A}$ is a (possibly unbounded) linear operator on $H$, densely defined, generating a $C_0$-semigroup $\mathcal{S}$. 

Let us first show that the stochastic integral in \eqref{X-dyn} makes sense. The following proposition is crucial:
\begin{proposition}
\label{prop:trace-QY}
For every $t\geq 0$, it holds that 
$$
\E\left[\tr(\mathcal{Q}^{1/2}\Y(t)\mathcal{Q}^{1/2})\right]=\tr(\mathcal{Q}^{1/2}\Sg(t)\Y_0\mathcal{Q}^{1/2})+\tr(\mathcal{Q}^{1/2}\int_0^t\Sg(s)\,ds\E[\Le(1)]\mathcal{Q}^{1/2})
$$
where $\int_0^t\Sg(s)\,ds$ is the Bochner integral of $s\mapsto\Sg(s)\in L_{HS}(\mathcal{H})$ and $\E[\Le(1)]$ is the operator-valued expected value of $\Le(1)$. 
\end{proposition}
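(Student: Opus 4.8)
The plan is to reduce the statement to an identity for the $\mathcal{H}$-valued expectation $\E[\Y(t)]$ by recognising the left-hand side as a bounded linear functional evaluated at $\Y(t)$. First I would show that the map $\ell:\mathcal{H}\to\R$, $\ell(\mathcal{T}):=\tr(\mathcal{Q}^{1/2}\mathcal{T}\mathcal{Q}^{1/2})$, is a well-defined bounded linear functional. Since $\mathcal{Q}$ is trace class, $\mathcal{Q}^{1/2}\in\mathcal{H}=L_{\text{HS}}(H)$, and as $\mathcal{T}$ is Hilbert--Schmidt (hence bounded), the product $\mathcal{Q}^{1/2}\mathcal{T}\mathcal{Q}^{1/2}$ is trace class, so $\ell$ is well defined. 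By the cyclicity of the trace one has $\ell(\mathcal{T})=\tr(\mathcal{T}\mathcal{Q})$, whence $|\ell(\mathcal{T})|\leq\|\mathcal{T}\|_{\text{op}}\tr(\mathcal{Q})\leq\tr(\mathcal{Q})\,\|\mathcal{T}\|_{\mathcal{H}}$; thus $\ell\in\mathcal{H}^*$, and linearity is clear.

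Next, because $\Y(t)$ is square-integrable by Lemma~\ref{lem:norm-bound-Y}, it is Bochner integrable in $\mathcal{H}$, so the bounded functional $\ell$ commutes with the expectation: $\E[\ell(\Y(t))]=\ell(\E[\Y(t)])$. The problem therefore reduces to computing the $\mathcal{H}$-valued mean $\E[\Y(t)]$ from the mild solution \eqref{sol-Y}. The term $\Sg(t)\Y_0$ is deterministic. For the stochastic convolution, I would decompose the square-integrable L\'evy process as $\Le(s)=s\,\E[\Le(1)]+\bar{\Le}(s)$, where $\bar{\Le}(s):=\Le(s)-s\,\E[\Le(1)]$ is a mean-zero, square-integrable $\mathcal{H}$-valued martingale. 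Then
\begin{equation*}
\int_0^t\Sg(t-s)\,d\Le(s)=\int_0^t\Sg(t-s)\,\E[\Le(1)]\,ds+\int_0^t\Sg(t-s)\,d\bar{\Le}(s),
\end{equation*}
where the first integral is a deterministic Bochner integral and the second is a stochastic integral against a martingale. Taking expectations, the martingale integral vanishes (it is a mean-zero martingale started at $0$), and after the substitution $u=t-s$ the first integral equals $\int_0^t\Sg(u)\,du\,\E[\Le(1)]$. Hence $\E[\Y(t)]=\Sg(t)\Y_0+\int_0^t\Sg(u)\,du\,\E[\Le(1)]$, and applying $\ell$ together with its linearity yields precisely the asserted formula.

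The main obstacle is the rigorous justification of the expectation computations in the infinite-dimensional setting: that the expectation operator commutes with the stochastic integral and that the stochastic convolution against $\bar{\Le}$ has zero mean. This rests on the square-integrability provided by Lemmas~\ref{lem:integrability} and~\ref{lem:norm-bound-Y}, which guarantee that all objects lie in $L^2(\Omega;\mathcal{H})$ and that the relevant integrands are admissible. A secondary point to check is that $s\mapsto\Sg(s)$ is Bochner integrable, so that $\int_0^t\Sg(s)\,ds$ is a well-defined operator applied to $\E[\Le(1)]\in\mathcal{H}$; this follows from the uniform continuity of the semigroup and the bound $\|\Sg(s)\|_{\text{op}}\leq\e^{s\|\C\|_{\text{op}}}$ established in the proof of Lemma~\ref{lem:integrability}.
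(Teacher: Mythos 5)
Your proposal is correct, and while the overall reduction (split off the deterministic term, then identify the expectation of the stochastic convolution) matches the paper, your treatment of the key step is genuinely different. The paper computes $\E\bigl[\int_0^t\Sg(t-s)\,d\Le(s)\bigr]$ analytically: it differentiates the characteristic functional $\theta\mapsto\exp\bigl(\int_0^t\Psi_{\Le}(\theta\Sg^*(u)\mathcal{T})\,du\bigr)$ at $\theta=0$, uses the L\'evy--Khinchine form \eqref{cumulant-L} of $\Psi_{\Le}$, and invokes the identity $\E[\Le(1)]=\mathcal{D}+\int_{\|\mathcal{Z}\|_{\mathcal{H}}>1}\mathcal{Z}\,\nu(d\mathcal{Z})$ to recognize the answer; this reuses the cumulant machinery of Prop.~\ref{prop:cumulant-Y} and exhibits the mean explicitly in terms of the L\'evy triplet, at the cost of an interchange of $d/d\theta$ with integrals that strictly speaking needs justification. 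You instead use the probabilistic compensation $\Le(s)=s\,\E[\Le(1)]+\bar{\Le}(s)$, with $\bar{\Le}$ a mean-zero square-integrable martingale, so that the stochastic integral against $\bar{\Le}$ has vanishing expectation; this is more elementary, avoids the L\'evy--Khinchine formula entirely, and rests only on the integrability supplied by Lemma~\ref{lem:integrability}. Your packaging of the trace step is also cleaner than the paper's: by showing $\ell(\mathcal{T})=\tr(\mathcal{Q}^{1/2}\mathcal{T}\mathcal{Q}^{1/2})=\tr(\mathcal{T}\mathcal{Q})$ is a bounded functional on $\mathcal{H}$ and letting it commute with the Bochner expectation, you justify in one stroke the exchange of expectation with the infinite sum over basis vectors, which the paper performs coordinatewise for an integrable $\mathcal{H}$-valued random variable via the weak definition of the operator-valued mean. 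Both proofs arrive at $\E[\Y(t)]=\Sg(t)\Y_0+\int_0^t\Sg(u)\,du\,\E[\Le(1)]$; yours trades explicitness about the L\'evy characteristics for a shorter and more robust argument.
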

\begin{proof}
First, note that the trace is linear, to give
$$
\E\left[\tr(\mathcal{Q}^{1/2}\Y(t)\mathcal{Q}^{1/2})\right]=\tr(\mathcal{Q}^{1/2}\Sg(t)\Y_0\mathcal{Q}^{1/2})+\E\left[\tr(\mathcal{Q}^{1/2}\int_0^t\Sg(t-s)\,d\Le(s)\mathcal{Q}^{1/2})
\right]\,.
$$
Suppose for a moment that $\mathcal{X}$ is a $\mathcal{H}$-valued integrable random variable. Then
\begin{align*}
\E\left[\tr(\mathcal{Q}^{1/2}\mathcal{X}\mathcal{Q}^{1/2})\right]&=\sum_{n=1}^{\infty}\E\left[(\mathcal{Q}^{1/2}\mathcal{X}\mathcal{Q}^{1/2}e_n,e_n)_H\right] \\
&=\sum_{n=1}^{\infty}\E\left[(\mathcal{X}\mathcal{Q}^{1/2}e_n,\mathcal{Q}^{1/2}e_n)_H\right] \,.
\end{align*}
But $(\mathcal{X}f,f)_H=\langle\mathcal{X},f\otimes f\rangle_{\mathcal{H}}$, which holds due to the isometry of the Hilbert-Schmidt operators with tensor products of Hilbert spaces, and 
$$
\E[(\mathcal{X}f,f)_H]=\E[\langle\mathcal{X},f\otimes f\rangle_{\mathcal{H}}]=\langle\mathcal{M},f\otimes f\rangle_{\mathcal{H}} \,,
$$
for some $\mathcal{M}\in\mathcal{H}$. This operator is called the mean of $\mathcal{X}$, and we write $\E[\mathcal{X}]=\mathcal{M}$, the operator-valued
expectation of $\mathcal{X}$. 
Thus
\begin{align*}
\E\left[\tr(\mathcal{Q}^{1/2}\mathcal{X}\mathcal{Q}^{1/2})\right]&=\sum_{n=1}^{\infty}\E\left[\langle\mathcal{X},\mathcal{Q}^{1/2}e_n\otimes\mathcal{Q}^{1/2}e_n\rangle_{\mathcal{H}}\right] \\
&=\sum_{n=1}^{\infty}\langle\mathcal{M},\mathcal{Q}^{1/2}e_n\otimes\mathcal{Q}^{1/2}e_n\rangle_{\mathcal{H}} \\
&=\sum_{n=1}^{\infty}(\mathcal{M}\mathcal{Q}^{1/2}e_n,\mathcal{Q}^{1/2}e_n)_H \\
&=\tr(\mathcal{Q}^{1/2}\mathcal{M}\mathcal{Q}^{1/2}) \\
&=\tr(\mathcal{Q}^{1/2}\E[\mathcal{X}]\mathcal{Q}^{1/2})\,.
\end{align*}
Letting $\mathcal{X}=\int_0^t\Sg(t-s)\,d\Le(s)$, we hence obtain
$$
\E[\tr(\mathcal{Q}^{1/2}\int_0^t\Sg(t-s)\,d\Le(s)\mathcal{Q}^{1/2})]=\tr\left(\mathcal{Q}^{1/2}\E\left[\int_0^t\Sg(t-s)\,d\Le(s)\right]\mathcal{Q}^{1/2}\right)
$$
We derive an expression for the mean of the stochastic integral. 

Recalling (the proof of) Prop.~\ref{prop:cumulant-Y}, we find that with $\theta\in\R$
\begin{align*}
\E\left[\e^{\mathrm{i}\langle\int_0^t\Sg(t-s)\,d\Le(s),\theta\mathcal{T}\rangle_{\mathcal{H}}}\right]&=\exp\left(\int_0^t\Psi_{\Le}(\Sg^*(u)(\theta\mathcal{T}))\,du\right) \\
&=\exp\left(\int_0^t\Psi_{\Le}(\theta\Sg^*(u)(\mathcal{T}))\,du\right) \,,
\end{align*}
with $\Psi_{\Le}$ defined in \eqref{cumulant-L}.
Since $\Psi_{\Le}(0)=0$, we find
$$
\frac{d}{d\theta}\E\left[\e^{\mathrm{i}\langle\int_0^t\Sg(t-s)\,d\Le(s),\theta\mathcal{T}\rangle_{\mathcal{H}}}\right]\vert_{\theta=0}=\int_0^t\frac{d}{d\theta}
\Psi_{\Le}(\theta\Sg^*(u)\mathcal{T})\,du\vert_{\theta=0}\,.
$$
But, for any $\mathcal{S}\in\mathcal{H}$,
$$
\frac{d}{d\theta}\Psi_{\Le}(\theta\mathcal{S})=\mathrm{i}\langle\mathcal{D},\mathcal{S}\rangle_{\mathcal{H}}-
\theta\langle\mathbb{Q}_{\Le}^0\mathcal{S},\mathcal{S}\rangle_{\mathcal{H}}+\mathrm{i}\int_{\mathcal{H}}(\langle\mathcal{Z},\mathcal{S}\rangle_{\mathcal{H}}
\e^{\mathrm{i}\theta\langle\mathcal{Z},\mathcal{S}\rangle_{\mathcal{H}}}-\langle\mathcal{Z},\mathcal{S}\rangle_{\mathcal{H}}\mathrm{1}(\|\mathcal{Z}\|_{\mathcal{H}}<1))\,\nu(d\mathcal{Z})\,.
$$
Therefore,
\begin{align*}
\E\left[\langle\int_0^t\Sg(t-s)\,d\Le(s),\mathcal{T}\rangle_{\mathcal{H}}\right]&=(-\mathrm{i})\int_0^t\frac{d}{d\theta}\Psi_{\Le}(\theta\Sg^*(u)\mathcal{T})\vert_{\theta=0}\,du \\
&=\int_0^t\langle\mathcal{D},\Sg^*(u)\mathcal{T}\rangle_{\mathcal{H}}+\int_{\|\mathcal{Z}\|_{\mathcal{H}}>1}\langle\mathcal{Z},\Sg^*(u)\mathcal{T}\rangle_{\mathcal{H}}\,\nu(d\mathcal{Z})\,du \\
&=\int_0^t\langle\Sg(u)\mathcal{D},\mathcal{T}\rangle_{\mathcal{H}}+\int_{\|\mathcal{Z}\|_{\mathcal{H}}>1}\langle\Sg(u)\mathcal{Z},\mathcal{T}\rangle_{\mathcal{H}}\,\nu(d\mathcal{Z})\,du \\
&=\langle\int_0^t\Sg(u)(\mathcal{D}+\int_{\|\mathcal{Z}\|_{\mathcal{H}}>1}\mathcal{Z}\,\nu(d\mathcal{Z})\,du),\mathcal{T} \rangle_{\mathcal{H}} \\
&=\langle\int_0^t\Sg(u)\,du(\mathcal{D}+\int_{\|\mathcal{Z}\|_{\mathcal{H}}>1}\mathcal{Z}\,\nu(d\mathcal{Z}),\mathcal{T} \rangle_{\mathcal{H}}\,.
\end{align*}
Thus, since $\E[\Le(1)]=\mathcal{D}+\int_{\|\mathcal{Z}\|_{\mathcal{H}}>1}\mathcal{Z}\,\nu(d\mathcal{Z})$, we get
$$
\E\left[\int_0^t\Sg(t-s)\,d\Le(s)\right]=\int_0^t\Sg(u)\,du\E[\Le(1)]\,.
$$
This completes the proof.
\end{proof}
To have the stochastic integral $\int_0^t\Y^{1/2}(s)\,dB(s)$ well-defined, the integrand must satisfy the condition
\begin{equation}
\E\left[\int_0^t\|\Y^{1/2}(s)\mathcal{Q}^{1/2}\|^2_{\mathcal{H}}\,ds\right]<\infty\,.
\end{equation}
But $\|\Y^{1/2}(s)\mathcal{Q}^{1/2}\|^2_{\mathcal{H}}=\tr(\mathcal{Q}^{1/2}\Y(s)\mathcal{Q}^{1/2})$. From 
Prop.~\ref{prop:trace-QY} above
we see that the expected value of this trace is integrable in time on any compact set. Thus, $\Y^{1/2}$ can be used as a stochastic volatility operator in the dynamics
of $X$ in \eqref{X-dyn}.

If the stochastic integral $\int_0^t\mathcal{S}(t-s)\Y^{1/2}(s)\,dB(s)$ exists, then we have the mild solution of \eqref{X-dyn}
\begin{equation}
\label{X-sol}
X(t)=\mathcal{S}(t)X_0+\int_0^t\mathcal{S}(t-s)\Y^{1/2}(s)\,dB(s)\,,
\end{equation}
for a given initial condition $X(0)=X_0\in H$. The stochastic integral is well-defined since
$$
\|\mathcal{S}(t-s)\Y^{1/2}(s)\mathcal{Q}^{1/2}\|_{\mathcal{H}}\leq\|\mathcal{S}(t-s)\|_{\text{op}}\|\Y^{1/2}(s)\mathcal{Q}^{1/2}\|_{\mathcal{H}}\,.
$$
By Yosida~\cite{Yosida}, the operator norm of the semigroup $\mathcal{S}$ is at most exponentially growing. Hence, in view of Prop.~\ref{prop:trace-QY},
integrability holds.

Here is a result on the characteristic function of the process $X(t)$:
\begin{proposition}
\label{prop:cumulant-X}
Suppose that there exists a self-adjoint, positive definite operator $\mathcal{D}\in L(H)$ such that $\Y^{1/2}(s)\mathcal{Q}\Y^{1/2}(s)=\mathcal{D}^{1/2}\Y(s)\mathcal{D}^{1/2}$ 
for all $s\geq 0$. Then, if $\Le$ is independent of $B$,
\begin{align*}
\E\left[\e^{\mathrm{i}(X(t),f)_H}\right]&=\exp\left(\mathrm{i}(X_0,\mathcal{S}^*(t)f)_H-\frac12\langle\Y_0,\int_0^t\Sg^*(s)((\mathcal{D}^{1/2}\mathcal{S}^*(t-s)f)\otimes(\mathcal{D}^{1/2}\mathcal{S}^*(t-s)f))\,ds\rangle_{\mathcal{H}}\right) \\
&\qquad\times\exp\left(\int_0^t\Psi_{\Le}\left(-\frac12\int_0^s\Sg^*(s-u)(\mathcal{D}^{1/2}\mathcal{S}^*(u)f\otimes
\mathcal{D}^{1/2}\mathcal{S}^*(u)f)\,du\right)\,ds\right)\,,
\end{align*}
for any $f\in H$. 
\end{proposition}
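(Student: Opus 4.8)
The plan is to condition on the volatility, exploiting the independence of $B$ and $\Le$. Write $\mathcal{G}_t:=\sigma(\Le(u):0\le u\le t)$; since $\Y$ is a measurable functional of $\Le$ and $\Y_0$, the whole path $\{\Y^{1/2}(s)\}_{0\le s\le t}$ is $\mathcal{G}_t$-measurable, while $B$ is independent of $\mathcal{G}_t$. Conditionally on $\mathcal{G}_t$ the integrand $\mathcal{S}(t-s)\Y^{1/2}(s)$ in the mild solution \eqref{X-sol} is therefore deterministic, so $\int_0^t\mathcal{S}(t-s)\Y^{1/2}(s)\,dB(s)$ is a centred Gaussian element of $H$ and $(X(t),f)_H$ is a real Gaussian variable with conditional mean $(X_0,\mathcal{S}^*(t)f)_H$ and some conditional variance $V_f$. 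By the tower property and the elementary Gaussian characteristic function,
\[
\E\left[\e^{\mathrm{i}(X(t),f)_H}\right]=\E\left[\E\left[\e^{\mathrm{i}(X(t),f)_H}\mid\mathcal{G}_t\right]\right]=\e^{\mathrm{i}(X_0,\mathcal{S}^*(t)f)_H}\,\E\left[\e^{-\frac12 V_f}\right].
\]
This conditional Gaussianity is exactly the statement alluded to in Remark~\ref{Remarkfiltering}.

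Next I would compute $V_f$. Using the self-adjointness of $\Y^{1/2}(s)$ one writes $(\mathcal{S}(t-s)\Y^{1/2}(s)\,dB(s),f)_H=(\Y^{1/2}(s)\mathcal{S}^*(t-s)f,dB(s))_H$, and the It\^o isometry for Wiener integrals against $B$ (covariance $\mathcal{Q}$) gives $V_f=\int_0^t(\mathcal{Q}\Y^{1/2}(s)\mathcal{S}^*(t-s)f,\Y^{1/2}(s)\mathcal{S}^*(t-s)f)_H\,ds$. Here the commutativity hypothesis $\Y^{1/2}(s)\mathcal{Q}\Y^{1/2}(s)=\mathcal{D}^{1/2}\Y(s)\mathcal{D}^{1/2}$ is the crucial input: moving $\Y^{1/2}(s)$ across by self-adjointness and substituting converts the integrand into $(\Y(s)h(s),h(s))_H$ with $h(s):=\mathcal{D}^{1/2}\mathcal{S}^*(t-s)f$. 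Invoking the identity $\langle\mathcal{V},h\otimes h\rangle_{\mathcal{H}}=(\mathcal{V}h,h)_H$ recorded earlier for Hilbert--Schmidt operators, this yields
\[
V_f=\int_0^t\langle\Y(s),h(s)\otimes h(s)\rangle_{\mathcal{H}}\,ds,\qquad h(s)=\mathcal{D}^{1/2}\mathcal{S}^*(t-s)f.
\]

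It remains to evaluate $\E[\e^{-\frac12 V_f}]$. Insert the mild solution $\Y(s)=\Sg(s)\Y_0+\int_0^s\Sg(s-u)\,d\Le(u)$ and move $\Sg$ onto the deterministic tensor $g(s):=h(s)\otimes h(s)$ via $\Sg^*$. The $\Y_0$-part is deterministic and produces the factor $\exp(-\frac12\langle\Y_0,\int_0^t\Sg^*(s)g(s)\,ds\rangle_{\mathcal{H}})$, which is precisely the first line of the claimed formula. For the stochastic part, a stochastic Fubini argument rewrites $\int_0^t\int_0^s\langle d\Le(u),\Sg^*(s-u)g(s)\rangle_{\mathcal{H}}\,ds$ as $\int_0^t\langle d\Le(u),\Phi(u)\rangle_{\mathcal{H}}$ with $\Phi(u)=\int_u^t\Sg^*(s-u)g(s)\,ds$. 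Applying the characteristic-functional formula for Lévy stochastic integrals, in the same manner as in the proof of Proposition~\ref{prop:cumulant-Y} but now in its Laplace form, gives $\E[\exp(-\frac12\int_0^t\langle d\Le(u),\Phi(u)\rangle_{\mathcal{H}})]=\exp(\int_0^t\Psi_{\Le}(-\tfrac12\Phi(u))\,du)$. Finally a change of variables $s\mapsto t-s$ in the outer integration, together with the reflection $v=t-s$ inside $\Phi$, turns the $u$-parametrisation into the $s$-parametrisation with inner integral $\int_0^s\Sg^*(s-u)(\mathcal{D}^{1/2}\mathcal{S}^*(u)f\otimes\mathcal{D}^{1/2}\mathcal{S}^*(u)f)\,du$ exactly as in the statement.

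The main obstacle I expect lies in two rigorous points. First, the conditional Gaussianity must be justified with care: one should argue, via a regular conditional distribution or a limiting argument over simple $\mathcal{G}_t$-measurable integrands, that given $\mathcal{G}_t$ the Wiener integral against the $\mathcal{G}_t$-measurable integrand is genuinely Gaussian, where the independence of $B$ and $\Le$ is what legitimises treating $\Y^{1/2}$ as frozen. Second, the cumulant formula is being applied to a real, non-negative exponent rather than an oscillatory one, so one is really using the Laplace transform of the Lévy integral; its validity and the finiteness of the relevant exponential moments rest on the non-decreasing-path assumption, which guarantees $\langle\Y(s),h(s)\otimes h(s)\rangle_{\mathcal{H}}=\tr(\Y(s)\,h(s)\otimes h(s))\ge 0$ because both $\Y(s)$ and the rank-one operator $h(s)\otimes h(s)$ are non-negative definite (Lemma~\ref{lem:pos-def-L} and Proposition~\ref{prop:Y-pos-def}). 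Consequently $\e^{-\frac12 V_f}\le 1$, the expectation is finite, and the analytic extension of $\Psi_{\Le}$ to the required argument is well defined. The stochastic Fubini interchange likewise needs the square-integrability bounds of Lemmas~\ref{lem:integrability} and~\ref{lem:norm-bound-Y} to be carried out legitimately.
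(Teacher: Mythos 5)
Your proposal follows essentially the same route as the paper's own proof: conditioning on the volatility path via the independence of $B$ and $\Le$, identifying the conditional Gaussian variance, rewriting it with the $\mathcal{D}$-hypothesis as $\int_0^t\langle\Y(s),h(s)\otimes h(s)\rangle_{\mathcal{H}}\,ds$, inserting the mild solution of $\Y$, applying stochastic Fubini to pass the $ds$-integral inside the $d\Le$-integral, and evaluating via the cumulant $\Psi_{\Le}$ with a final change of variables. If anything, your treatment of the Laplace-form application of $\Psi_{\Le}$ (justified by the non-negativity of the exponent, hence finiteness of the exponential moment) is more careful than the paper, which applies the characteristic-functional formula to the real-valued exponent without comment.
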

\begin{proof}
First, from the mild solution of $X(t)$ we find for $f\in H$
$$
(X(t),f)_H=(\mathcal{S}(t)X_0,f)_H+(\int_0^t\mathcal{S}(t-s)\Y^{1/2}(s)\,dB(s),f)_H\,.
$$
We compute the characteristic function of the random variable $(\int_0^t\mathcal{S}(t-s)\Y^{1/2}(s)\,dB(s),f)_H$:
Since $\Le$ and $B$ are independent, we have that $\Y$ and $B$ are independent. From the tower property of conditional 
expectation, we therefore get after conditioning on the $\sigma$-algebra generated by the paths of 
$\Y$:
\begin{align*}
&\E\left[\exp\left(\mathrm{i}(\int_0^t\mathcal{S}(t-s)\Y^{1/2}(s)\,dB(s),f)_H\right)\right] \\
&\qquad=\E\left[\exp\left(-\frac12\int_0^t(\mathcal{Q}\Y^{1/2}(s)\mathcal{S}^*(t-s)f,
\Y^{1/2}(s)\mathcal{S}^*(t-s)f)_H\,ds\right)\right]\,.
\end{align*}
From the property of the operator $\mathcal{D}$, 
\begin{align*}
(\mathcal{Q}\Y^{1/2}\mathcal{S}^*(t-s)f,\Y^{1/2}(s)\mathcal{S}^*(t-s)f)_H&=(\Y^{1/2}(s)\mathcal{Q}\Y^{1/2}(s)\mathcal{S}^*(t-s)f,\mathcal{S}^*(t-s)f)_H \\
&=(\mathcal{D}^{1/2}\Y(s)\mathcal{D}^{1/2}\mathcal{S}^*(t-s)f,\mathcal{S}^*(t-s)f)_H \\
&=(\Y(s)\mathcal{D}^{1/2}\mathcal{S}^*(t-s)f,\mathcal{D}^{1/2}\mathcal{S}^*(t-s)f)_H \\
&=\langle\Y(s),(\mathcal{D}^{1/2}\mathcal{S}^*(t-s)f)\otimes(\mathcal{D}^{1/2}\mathcal{S}^*(t-s)f)\rangle_{\mathcal{H}}\,.
\end{align*}
For simplicity, introduce for the moment the notation $\mathcal{T}(s)\in\mathcal{H}$ for the family of operators parametrized by time $s\geq 0$, defined by
$$
\mathcal{T}(s)=(\mathcal{D}^{1/2}\mathcal{S}^*(s)f)\otimes(\mathcal{D}^{1/2}\mathcal{S}^*(s)f)\,.
$$
Thus, from the mild solution of $\Y$, 
$$
\int_0^t\langle\Y(s),\mathcal{T}(t-s)\rangle_{\mathcal{H}}\,ds=\int_0^t\langle\Sg(s)\Y_0,\mathcal{T}(t-s)\rangle_{\mathcal{H}}\,ds+
\int_0^t\langle\int_0^s\Sg(s-u)\,d\Le(u),\mathcal{T}(t-s)\rangle_{\mathcal{H}}\,ds
$$
We have that 
$$
\int_0^t\langle\Sg(s)\Y_0,\mathcal{T}(t-s)\rangle_{\mathcal{H}}\,ds=\langle\Y_0,\int_0^t\Sg^*(s)\mathcal{T}(t-s)\,ds\rangle_{\mathcal{H}}
$$
where the integral on the right-hand side is interpreted in the Bochner sense. 
It holds, after appealing to a Fubini theorem for stochastic integrals in Hilbert space (see Peszat and Zabczyk~\cite[Theorem 8.14]{PZ})
\begin{align*}
\int_0^t\langle\int_0^s\Sg(s-u)\,d\Le(u),\mathcal{T}(t-s)\rangle_{\mathcal{H}}\,ds&=\int_0^t\langle\int_u^t\Sg^*(s-u)\mathcal{T}(t-s)\,ds,d\Le(u)\rangle_{\mathcal{H}}\,.
\end{align*}
The $ds$-integral inside the inner product is again interpreted as a Bochner integral.  Hence,
\begin{align*}
&\E\left[\exp\left(-\frac12\int_0^t\langle\int_0^s\Sg(s-u)\,d\Le(u),\mathcal{T}(t-s)\rangle_{\mathcal{H}}\,ds\right)\right] \\
&\qquad\qquad=\E\left[\exp\left(-\frac12\int_0^t\langle\int_u^t\Sg^*(s-u)\mathcal{T}(t-s)\,ds,d\Le(u)\rangle_{\mathcal{H}}\right)\right]\\
&\qquad\qquad=\exp\left(\int_0^t\Psi_{\Le}\left(-\frac12\int_u^t\Sg^*(s-u)(\mathcal{D}^{1/2}\mathcal{S}^*(t-s)f\otimes
\mathcal{D}^{1/2}\mathcal{S}^*(t-s)f)\,ds\right)\,du\right) \\
&\qquad\qquad=\exp\left(\int_0^t\Psi_{\Le}\left(-\frac12\int_0^s\Sg^*(s-u)(\mathcal{D}^{1/2}\mathcal{S}^*(u)f\otimes
\mathcal{D}^{1/2}\mathcal{S}^*(u)f)\,du\right)\,ds\right)\,. 
\end{align*}
This proves the Proposition.
\end{proof}
The result shows that we recover an affine structure of $X$ in terms of $X_0$ and $\Y_0$.
Note that if $\mathcal{Q}$ commutes with $\Y(s)$, then 
$\mathcal{Q}^{1/2}$ commutes with $\Y^{1/2}(s)$, and we find
$$
\Y^{1/2}(s)\mathcal{Q}\Y^{1/2}(s)=\mathcal{Q}^{1/2}\Y(s)\mathcal{Q}^{1/2}\,.
$$
Hence, in this case $\mathcal{D}=\mathcal{Q}$. Indeed, this puts rather strong restrictions on the volatility model $\Y$.
A sufficient condition for $\Y$ commuting with $\mathcal{Q}$ is that $\mathcal{Q}$ commutes with $\Y_0$ and $\Le(t)$ for all
$t\geq 0$, and that $\C(\mathcal{T})\mathcal{Q}=\C(\mathcal{T}\mathcal{Q})$ and 
$\mathcal{Q}\C(\mathcal{T})=\C(\mathcal{Q}\mathcal{T})$ for every $\mathcal{T}\in\mathcal{H}$. If this is the case, we have from
the dynamics of $\Y$ in \eqref{dyn-Y}
\begin{equation}
\label{dyn-YQ1}
\mathcal{Q}\Y(t)=\mathcal{Q}\Y_0+\int_0^t\C(\mathcal{Q}\Y(s))\,ds+\mathcal{Q}\Le(t)\,,
\end{equation}
and
\begin{equation}
\label{dyn-YQ2}
\Y(t)\mathcal{Q}=\mathcal{Q}\Y_0+\int_0^t\C(\Y(s)\mathcal{Q})\,ds+\mathcal{Q}\Le(t)\,.
\end{equation}
Introduce now the notation 
\begin{equation}
\Le_{\mathcal{Q}}(t):=\mathcal{Q}\Le(t)\,,
\end{equation} 
which is an $\mathcal{H}$-valued process. It is in fact a L\'evy process with values in $\mathcal{H}$. Indeed,
its conditional characteristic function is (here $\mathcal{T}\in\mathcal{H}$ and $t\geq s$)
\begin{align*}
\E\left[\e^{\mathrm{i}\langle\Le_{\mathcal{Q}}(t)-\Le_{\mathcal{Q}}(s),\mathcal{T}\rangle_{\mathcal{H}}}\,|\,\mathcal{F}_s\right]
&=\E\left[\e^{\mathrm{i}\langle\mathcal{Q}(\Le(t)-\Le(s)),\mathcal{T}\rangle_{\mathcal{H}}}\,|\,\mathcal{F}_s\right] \\
&=\E\left[\e^{\mathrm{i}\langle\Le(t)-\Le(s),\mathcal{Q}\mathcal{T}\rangle_{\mathcal{H}}}\,|\,\mathcal{F}_s\right] \\
&=\E\left[\e^{\mathrm{i}\langle\Le(t)-\Le(s),\mathcal{Q}\mathcal{T}\rangle_{\mathcal{H}}}\right] \\
&=\exp\left((t-s)\Psi_{\Le}(\mathcal{Q}\mathcal{T})\right)
\end{align*}
by the independent increment property and the definition of the cumulant of $\Le$. Hence, $\Le_{\mathcal{Q}}(t)-\Le_{\mathcal{Q}}(s)$ is independent of $\mathcal{F}_s$ with a
stationary distribution, which implies that $\Le_{\mathcal{Q}}$ is a L\'evy process. Its covariance operator is given by $\mathbb{Q}_{\Le_{\mathcal{Q}}}=\mathcal{Q}\mathbb{Q}_{\Le}\mathcal{Q}$, which is easily seen from
\begin{align*}
\E\left[\langle\Le_{\mathcal{Q}}(t),\mathcal{T}\rangle_{\mathcal{H}}\langle\Le_{\mathcal{Q}}(t),\mathcal{S}\rangle_{\mathcal{H}}\right]
&=\E\left[\langle\Le(t),\mathcal{Q}\mathcal{T}\rangle_{\mathcal{H}}\langle\Le(t),\mathcal{Q}\mathcal{S}\rangle_{\mathcal{H}}\right] \\
&=\langle\mathbb{Q}_{\Le}\mathcal{Q}\mathcal{T},\mathcal{Q}\mathcal{S}\rangle_{\mathcal{H}} \\
&=\langle\mathcal{Q}\mathbb{Q}_{\Le}\mathcal{Q}\mathcal{T},\mathcal{S}\rangle_{\mathcal{H}} \,,
\end{align*}
with $\mathcal{T},\mathcal{S}\in\mathcal{H}$. Therefore, we have a mild solution of the equation for $\Y_{\mathcal{Q}}:=\mathcal{Q}\Y$ in \eqref{dyn-YQ1} given as
\begin{equation}
\label{dyn-YQ-mild}
\Y_{\mathcal{Q}}(t)=\Sg(t)\mathcal{Q}\Y_0+\int_0^t\Sg(t-s)\,d\Le_{\mathcal{Q}}(s)\,.
\end{equation}
Moreover, we see that $\Y(t)\mathcal{Q}$ in \eqref{dyn-YQ2} solves the same equation, and thus $\mathcal{Q}\Y(t)=\Y(t)\mathcal{Q}$
by uniqueness of solutions, and the claimed commutativity follows.
We remark that if $\mathcal{Q}$ commutes with $\mathcal{C}$, then the assumed property of $\C=\C_i$ holds for $i=1,2$. Also,
if $\Le$ is the simple choice as in Ex.~\ref{trivial-ex-levy}, it commutes with $\mathcal{Q}$ whenever $\mathcal{U}$ commutes with
$\mathcal{Q}$. 

Let us investigate the "adjusted returns" implied by the model. To this end, fix $\Delta t>0$, and define the "adjusted return" by 
$$
R(t,\Delta t)=X(t+\Delta t)-\mathcal{S}(\Delta t)X(t)\,.
$$
From \eqref{X-sol}, we find after using the semigroup property of $\mathcal{S}$,
\begin{align*}
R(t,\Delta t)
&=\int_t^{t+\Delta t}\mathcal{S}(t+\Delta t-s)\Y^{1/2}(s)\,dB(s)\,.
\end{align*}
We have:
\begin{lemma}\label{Lemmafiltering}
Let $\mathcal{F}^{\Y}$ be the $\sigma$-algebra generated by the paths of $\Y$. Then $R(t,\Delta t)|\mathcal{F}^{\Y}$
is a mean zero $H$-valued Gaussian random variable, with covariance operator
$$
\mathcal{Q}_{R(t,\Delta t)|\Y}:=\int_t^{t+\Delta t}\mathcal{S}(t+\Delta t-s)\Y^{1/2}(s)\mathcal{Q}\Y^{1/2}(s)\mathcal{S}^*(t+\Delta t-s)\,ds\,.
$$
\end{lemma}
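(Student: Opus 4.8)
The plan is to exploit the independence of $\Le$ and $B$ --- which, as already noted in the proof of Proposition~\ref{prop:cumulant-X}, makes $\Y$ (and hence $\Y^{1/2}$) independent of $B$ --- and then to recognise $R(t,\Delta t)$, conditioned on $\mathcal{F}^{\Y}$, as a Wiener integral with a deterministic operator-valued integrand. First I would record the representation already derived above,
$$
R(t,\Delta t)=\int_t^{t+\Delta t}\mathcal{S}(t+\Delta t-s)\Y^{1/2}(s)\,dB(s)\,,
$$
and observe that the integrand $\Phi(s):=\mathcal{S}(t+\Delta t-s)\Y^{1/2}(s)$ is $\mathcal{F}^{\Y}$-measurable, while $B$ is independent of $\mathcal{F}^{\Y}$. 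The integrability needed for the integral to exist has already been secured through Proposition~\ref{prop:trace-QY} together with the at-most-exponential bound on $\|\mathcal{S}\|_{\text{op}}$.

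Next I would compute the conditional characteristic functional. Fixing $f\in H$ and conditioning on $\mathcal{F}^{\Y}$, the integrand $\Phi$ becomes deterministic, so the standard fact that a Wiener integral of a deterministic integrand against a $\mathcal{Q}$-Wiener process is a centred Gaussian random variable yields
$$
\E\left[\e^{\mathrm{i}(R(t,\Delta t),f)_H}\,|\,\mathcal{F}^{\Y}\right]=\exp\left(-\tfrac12\int_t^{t+\Delta t}|\mathcal{Q}^{1/2}\Y^{1/2}(s)\mathcal{S}^*(t+\Delta t-s)f|_H^2\,ds\right)\,,
$$
which is precisely the conditioning step appearing in Proposition~\ref{prop:cumulant-X}. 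Rewriting the integrand as $(\Y^{1/2}(s)\mathcal{Q}\Y^{1/2}(s)\mathcal{S}^*(t+\Delta t-s)f,\mathcal{S}^*(t+\Delta t-s)f)_H$ and using the self-adjointness of $\Y^{1/2}(s)$ identifies this quadratic form with $(\mathcal{Q}_{R(t,\Delta t)|\Y}f,f)_H$. Since every one-dimensional projection $(R(t,\Delta t),f)_H$ is thus conditionally a mean-zero real Gaussian with variance $(\mathcal{Q}_{R(t,\Delta t)|\Y}f,f)_H$, the $H$-valued random variable $R(t,\Delta t)\,|\,\mathcal{F}^{\Y}$ is Gaussian with mean zero and covariance operator $\mathcal{Q}_{R(t,\Delta t)|\Y}$. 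One should additionally verify that this operator is genuinely self-adjoint, non-negative definite and trace class; this follows from the integrability above together with the non-negative definiteness of each $\Y^{1/2}(s)\mathcal{Q}\Y^{1/2}(s)$.

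The main obstacle I anticipate is the rigorous justification of the conditioning, namely that conditionally on $\mathcal{F}^{\Y}$ the process $B$ remains a $\mathcal{Q}$-Wiener process and the Itô integral against $B$ collapses to a Wiener integral with the frozen deterministic integrand $\Phi$. This rests entirely on the independence of $B$ and $\Y$ and can be made precise either through a regular-conditional-probability (disintegration) argument, or by approximating $\Phi$ by simple $\mathcal{F}^{\Y}$-measurable integrands and passing to the $L^2(\Omega;H)$-limit, with the Gaussianity and the covariance formula both surviving the limit.
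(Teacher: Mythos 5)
Your proposal is correct and follows essentially the same route as the paper: the paper likewise invokes the computation from the proof of Proposition~\ref{prop:cumulant-X} (independence of $\Le$, hence $\Y$, from $B$; conditioning on $\mathcal{F}^{\Y}$ so the integrand is frozen), reads off the conditional characteristic functional $\exp\bigl(-\tfrac12\int_t^{t+\Delta t}|\mathcal{Q}^{1/2}\Y^{1/2}(s)\mathcal{S}^*(t+\Delta t-s)f|_H^2\,ds\bigr)$ as Gaussian, and obtains the covariance operator by the same direct computation. The extra care you take with the disintegration argument and the trace-class verification only makes explicit what the paper leaves implicit.
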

\begin{proof}
By inspection of the proof of Prop.~\ref{prop:cumulant-X}, we find for $f\in H$
\begin{align*}
&\E\left[\exp(\mathrm{i}(R(t,\Delta t),f)_H)\,|\,\mathcal{F}^{\Y}\right]=\exp\left(-\frac12\int_t^{t+\Delta t}|\mathcal{Q}^{1/2}\Y^{1/2}\mathcal{S}^*(t+\Delta t-s)f|^2_H\,ds\right)\,.
\end{align*}
This is the characteristic function of a Gaussian mean-zero real valued random variable. Hence, $R(t,\Delta t)|\mathcal{F}^{\Y}$ is
Gaussian in $H$ with mean equal to zero. The conditional covariance operator follows by a direct computation.
\end{proof}
The stochastic volatility model yields a Gaussian variance-mixture model for the adjusted returns (see Barndorff-Nielsen and Shephard~\cite{BNS}
for mean-variance mixture models and stochastic volatility in finance). Remark that if $\mathcal{Q}$ and $\Y$ commute, the conditional
covariance operator becomes
$$
\mathcal{Q}_{R(t,\Delta t)|\Y}:=\int_t^{t+\Delta t}\mathcal{S}(t+\Delta t-s)\Y_{\mathcal{Q}}(s)\mathcal{S}^*(t+\Delta t-s)\,ds\,,
$$
with the definition of $\Y_{\mathcal{Q}}$ given above.
\begin{remark} \label{Remarkfiltering} In Lemma \ref{Lemmafiltering} a simplified filtering problem for the adjusted returns of  the model  is solved. Here the observable is the volatility $\Y^{1/2}$. Compared to more general filtering models, as e.g. those described in Xiong~\cite{Xiong}, our filtering model is simple, as the observable does not depend on the signal  $X$, for which the adjusted returns are computed.
\end{remark}

\section{Application to forward price modelling}

Let $H=H_w$, the Filipovic space of all absolutely continuous functions $f:\R_+\rightarrow\R$ such that
\begin{equation}
|f|^2_w:=f(0)^2+\int_0^{\infty}w(x)|f'(x)|^2\,dx<\infty\,,
\end{equation}
where $w:\R_+\rightarrow \R_+$ is an increasing function with $w(0)=1$. We assume that $\int_0^{\infty}w^{-1}(x)\,dx<\infty$, and
denote the (naturally defined) inner product $(\cdot,\cdot)_w$.  
It turns out that $H_w$ is a separable Hilbert space equipped with the norm $|\cdot|_w$. Moreover, the evaluation functional
$\delta_x(f)=f(x)$ is continuous on $H_w$. As a linear functional, we can express $\delta_x$ by $(\cdot,h_x)_w$, with
\begin{equation}
\label{eq:def-hx}
h_x(y)=1+\int_0^{x\wedge y}w^{-1}(z)\,dz, y\in\R_+\,.
\end{equation}
See Filipovic~\cite{filipovic} for the introduction of this space and its properties (see also Benth and Kr\"uhner~\cite{BK-HJM} for a further
analysis of this space). 

Consider $X$ defined in \eqref{X-dyn} for $H=H_w$ and $A=\partial/\partial x$, the derivative operator. Then $X$ can be considered
as the dynamics of  the forward curve, that is, $f(t,x):=\delta_x(X(t))=X(t)(x)$, where $f(t, x):=F(t, t+x)$, and $t\mapsto F(t,T), t\leq T$ is the arbitrage-free forward price dynamics of a contract delivering an asset (commodity or stock) at time $T$ (see Benth and Kr\"uhner~\cite{BK-HJM}). 
We note that the semigroup of $A$ will be the right shift operator
$\mathcal{S}(t)f=f(\cdot+t)$, and that
$$
\delta_x\mathcal{S}(t)g=g(t+x)=\delta_{x+t}g\,.
$$ 
for any $g\in H_w$. We find from the mild solution of $X$ in \eqref{X-sol} that
\begin{equation}
\label{eq:f-dyn-filipovic}
f(t,x)=f_0(t+x)+\delta_x\int_0^t\mathcal{S}(t-s)\Y^{1/2}(s)\,dB(s)
\end{equation}
where $f_0(t+x)=\delta_x\mathcal{S}(t) X_0$. Note that by Lemma~3.2 in Benth and Kr\"uhner~\cite{BK-HJM}, it holds
$$
\lim_{t\rightarrow\infty}(X_0,\mathcal{S}^*(t)h_x)_H=\lim_{t\rightarrow\infty}\delta_x\mathcal{S}(t)X_0=\lim_{t\rightarrow\infty}f_0(t+x)
=f_0(\infty)\,.
$$
Here, $f_0(\infty)$ denotes the limit of $f_0(y)$ as $y\rightarrow\infty$, which exists. Hence, from the mild solution in 
\eqref{eq:f-dyn-filipovic}, the mean of $f(t,x)$ for given $x\in\R_+$ has a limit $f_0(\infty)$ as time tends to infinity.


Now we investigate the stochastic integral in \eqref{eq:f-dyn-filipovic} in more detail.
By Thm.~2.1. in Benth and Kr\"uhner~\cite{BK-HJM}, there exists a real-valued Brownian motion $b_x$ such that
\begin{equation}
\delta_x\int_0^t\mathcal{S}(t-s)\Y^{1/2}(s)\,dB(s)=\int_0^t\sigma(t,s,x)\,db_x(s)\,,
\end{equation}
where
\begin{align*}
\sigma^2(t,s,x)&=(\delta_x\mathcal{S}(t-s)\Y^{1/2}(s)\mathcal{Q}(\mathcal{S}(t-s)\Y^{1/2}(s))^*\delta_x^*)(1) \\
&=\delta_x\mathcal{S}(t-s)(\Y^{1/2}(s)\mathcal{Q}\Y^{1/2}(s))(\delta_x\mathcal{S}(t-s))^*(1) \\
&=\delta_{x+t-s}(\Y^{1/2}\mathcal{Q}\Y^{1/2}(s))\delta_{x+t-s}^*(1)\,.
\end{align*}
We remark that the Brownian motion $ b_x$ depends on $x$, since the representation in Thm.~2.1. in Benth and Kr\"uhner~\cite{BK-HJM}
is for a given linear functional, which in this case $\delta_x$. We know that 
$\delta_x^*(1)=h_x(\cdot)$  (see e.g. Benth and Kr\"uhner~\cite{BK-HJM}), and therefore
\begin{equation}
\sigma^2(t,s,x)=(\Y^{1/2}(s)\mathcal{Q}\Y^{1/2}(s)(h_{x+t-s}(\cdot)))(x+t-s)\,.
\end{equation}
Hence, we map the function $h_{x+t-s}$ by the operator $\Y^{1/2}(s)\mathcal{Q}\Y^{1/2}(s)$, and evaluate the resulting function
in $H_w$ at $x+t-s$. As $\Y^{1/2}$ is stochastic, we get a stochastic volatility $\sigma(t,s,x)$, which is depending on current time $t$,
previous times $s$ and the "spatial" variable $x$. 
In particular, the spot price dynamics $S(t):=f(t,0)$ becomes
$$
S(t)=f_0(t)+\int_0^t\sigma(t,s,0)\,db_0(s)\,.
$$
I.e., the spot price dynamics follows a Volterra process where the integrand $\sigma(t,s,0)$ is stochastic. We refer to 
Barndorff-Nielsen, Benth and Veraart~\cite{BNBV-spot} for an application to Volterra processes (and more specifically, Brownian and
L\'evy semistationary processes) to model spot prices in energy markets.

Let us carry our discussion further, and {\it suppose} that $\mathcal{Q}$ commutes with $\Y_0$ and $\Le(t)$ for $t\geq 0$, as well 
as that we have $\C(\mathcal{T})\mathcal{Q}=\C(\mathcal{T}\mathcal{Q})$ and $\mathcal{Q}\C(\mathcal{T})=\C(\mathcal{Q}\mathcal{T})$
for any $\mathcal{T}\in\mathcal{H}$. 
Then we recall from the previous Section that $\Y(s)$ will commute with $\mathcal{Q}$ for every $s\geq 0$. The process $\Y^{1/2}(s)$ will also commute with $\mathcal{Q}$, and 
$$
\sigma^2(t,s,x)=(\Y(s)\mathcal{Q}h_{x+t-s})(x+t-s)\,.
$$
Recalling the definition of $\Y_{\mathcal{Q}}$ in \eqref{dyn-YQ-mild}, we find
\begin{align*}
\sigma^2(t,s,x)&=\delta_{x+t-s}(\Y_{\mathcal{Q}}(s)(h_{x+t-s})) \\
&=(\Y_{\mathcal{Q}}(s)h_{x+t-s},h_{x+t-s})_w \\
&=\langle\Y_{\mathcal{Q}}(s),h_{x+t-s}\otimes h_{x+t-s}\rangle_{\mathcal{H}}
\end{align*}
since $\delta_z(f)=(f,h_z)_w$ for any $f\in H_w$. Similar as in Prop.~\ref{prop:cumulant-Y}, we can calculate the cumulant of the process
$\Y_{\mathcal{Q}}$ for any $\mathcal{T}\in\mathcal{H}$, and in particular we can calculate the cumulant of the process
$s\mapsto \langle\Y_{\mathcal{Q}}(s),h_{x+t-s}\otimes h_{x+t-s}\rangle_{\mathcal{H}}$ for $s\leq t$ by choosing 
$\mathcal{T}=h_{x+t-s}\otimes h_{x+t-s}$. A simple calculation using the definition of $h_x$ in
\eqref{eq:def-hx} shows that 
$$
(h_{x+t-s}\otimes h_{x+t-s})(f)=\left(f(0)+\int_0^{x+t-s}f'(y)\,dy\right)h_{x+t-s}=\mathcal{I}_{x+t-s}(f)h_x\,,
$$ 
where $\mathcal{I}_x\in H_w^*$ is defined as $\mathcal{I}_x(f)=\delta_0(f)+\int_0^xf'(y)\,dy$ for any $f\in H_w$. 

In the above considerations we obtain a "marginal" dynamics, in the sense of a dynamics for a forward contract with fixed time to maturity
$x$. We now represent the forward price dynamics as a space-time random field to emphasize also its spatial
dynamics (i.e., its dynamics in time-to-maturity $x$). First, from Prop.~3.6 in 
Benth and Kr\"uhner~\cite{BK2} we find for any $f\in H_w$,
\begin{align*}
(\mathcal{S}(t-s)\Y^{1/2}(s)f)(x)&=(\mathcal{S}(t-s)\Y^{1/2}(s)f,h_x)_w \\
&=(f,(\mathcal{S}(t-s)\Y^{1/2}(s))^*(h_x))_w \\
&=(\mathcal{S}(t-s)\Y^{1/2}(s))^*(h_x)(0)f(0) \\
&\qquad\qquad+\int_0^{\infty}w(y)(\mathcal{S}(t-s)\Y^{1/2}(s))^*(h_x)'(y)f'(y)\,dy \\
&=(\Y^{1/2}(s)\mathcal{S}^*(t-s)h_x)(0)f(0) \\
&\qquad\qquad+\int_0^{\infty}w(y)(\Y^{1/2}(s)\mathcal{S}^*(t-s)h_x)'(y)f'(y)\,dy \,.
\end{align*}
Again from Prop.~3.6 in Benth and Kr\"uhner~\cite{BK2},
\begin{align*}
\mathcal{S}^*(t)h_x(\cdot)&=h_x(0)(\mathcal{S}(t)h_{\cdot})(0)+\int_0^{\infty}w(y)(\mathcal{S}(t)h_{\cdot})'(y)h_x'(y)\,dy\,.
\end{align*}
But $\mathcal{S}(t)h_x(y)=h_x(y+t)$ and $h_x'(y)=w^{-1}(y)\mathbf{1}(y<x)$. Hence,
\begin{align*}
\mathcal{S}^*(t)h_x(\cdot)&=h_{\cdot}(t)+\int_0^xw^{-1}(y+t)\mathbf{1}(y+t<\cdot)\,dy \\
&=h_t(\cdot)+\int_t^{x+t}w^{-1}(y)\mathbf{1}(y<\cdot)\,dy \\
&=h_{t+x}(\cdot)\,.
\end{align*}
If we use the notation that $B(ds,dy):=\partial_xB(ds,y)\,dy$, we find
\begin{align*}
\delta_x\int_0^t\mathcal{S}(t-s)\Y^{1/2}(s)\,dB(s)&=\int_0^t(\Y^{1/2}(s)h_{x+t-s})(0)\,dB(s,0) \\
&\qquad+\int_0^t\int_0^{\infty}
w(y)(\Y^{1/2}(s)h_{x+t-s})'(y)\,B(ds,dy)\,,
\end{align*}
or, a representation of $f(t,x):=\delta_x(X(t))$ as a spatio-temporal random field
\begin{align*}
f(t,x)&=f_0(t+x)+\int_0^t(\Y^{1/2}(s)h_{x+t-s})(0)\,dB(s,0)+\int_0^t\int_0^{\infty}
w(y)(\Y^{1/2}(s)h_{x+t-s})'(y)\,B(ds,dy)\,.
\end{align*}
Note that $B(t,0)=\delta_0B(t)=(B(t),h_0)_w=(B(t),1)_w$ is a real-valued Brownian motion with variance $|\mathcal{Q}^{1/2}1|^2_w$.
Hence, $b_0(t):=B(t,0)/|\mathcal{Q}^{1/2}1|_w$ is a real-valued standard Brownian motion and we can view the first integral as 
an Ito integral of a volatility process given by $s\mapsto (\Y^{1/2}(s)h_{x+t-s})(0)$ for $s\leq t$ where $x$ is a parameter. 
It becomes a real-valued Volterra process with parameter $x$. For the second
integral, we integrate with respect to a spatio-temporal random field $(s,y) \mapsto B(s,y)$ over $[0,t]\times\R_+$, thus
becoming a stochastic
Volterra random field. This part is analogous to an ambit field, a class of spatio-temporal random fields defined in
Barndorff-Nielsen and Schmiegel~\cite{BN-Schm}. In a special case, the ambit fields take the form
$$
A(t,x)=\int_0^t\int_{0}^{\infty}g(t,s,x,y)\eta(s,y) B(ds,dy)
$$
for a stochastic random field $\eta$ and a deterministic kernel function $g$. Under appropriate integrability conditions, the
ambit field $A(t,x)$ is well-defined (see e.g. Barndorff-Nielsen, Benth and Veraart~\cite{BNBV-first}). We observe that 
we can identify $w(y)$ with the kernel function $g$, giving a very simple kernel. On the other hand, the volatility
field $\eta$ is more complex in $f$, as it is also $x$-dependent and not only $s$ and $y$ dependent. Our stochastic
volatility model serves as a motivation for an extension of the ambit field models. We refer to  
Barndorff-Nielsen, Benth and Veraart~\cite{BNBV-forward} and \cite{BNBV-cross} for
an application of ambit fields to energy forward price modelling.

We finally remark that in many commodity markets one observes an increasing volatility with decreasing time to delivery, known as the 
Samuelson effect (see Samuelson~\cite{samuelson}). To include this in our dynamics of $X$, we can add an operator $\Psi(t)\in\mathcal{H}$,
possibly time-dependent, such that
$$
dX(t)=\mathcal{A}X(t)\,dt+\Psi(t)\Y^{1/2}(t)\,dB(t)\,.
$$ 
Much of the analysis above can, under natural integrability conditions on $\Psi$, be carried through for this model. 

\appendix

\section{A result on symmetric Hilbert-Schmidt operators}
In this section we provide the arguments for a claim in the proof of Proposition \ref{prop:zeroMGpart} for the convenience of the reader. 

\begin{lemma}\label{lem:app1}
  Let $H$ be a separable Hilbert space and let $(e_k)_{k\in\N}$ an orthonormal basis of $H$. Then any symmetric Hilbert-Schmidt operator $\mathcal{V}$ on $H$ can be written as
  \begin{align*}
		\mathcal{V} = \sum_{k,l\in\N} \gamma_{k,l} e_k\otimes e_l,
	\end{align*}
		with $\sum_{k,l} \gamma_{k,l}^2<\infty$ and $\gamma_{k,l} = \gamma_{l,k}$. 
\end{lemma}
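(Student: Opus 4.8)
The plan is to recognise the family $\{e_k\otimes e_l\}_{k,l\in\N}$ as an orthonormal basis of $\mathcal{H}=L_{\text{HS}}(H)$ and then simply read off the expansion of $\mathcal{V}$ as its Fourier series. Recall that under the paper's convention the rank-one operator $e_k\otimes e_l$ acts by $(e_k\otimes e_l)x=(x,e_k)_H e_l$, so that $(e_k\otimes e_l)e_m=\delta_{km}e_l$. I would set $\gamma_{k,l}:=(\mathcal{V}e_k,e_l)_H$ and observe, using the identity $\langle\mathcal{V},h\otimes h\rangle_{\mathcal{H}}=(\mathcal{V}h,h)_H$ recorded before the proof of Proposition \ref{prop:zeroMGpart} (computed directly in the general off-diagonal case), that $\gamma_{k,l}=\langle\mathcal{V},e_k\otimes e_l\rangle_{\mathcal{H}}$; these are exactly the Fourier coefficients of $\mathcal{V}$ with respect to the family $\{e_k\otimes e_l\}$.

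Next I would check orthonormality by a direct computation,
$$
\langle e_k\otimes e_l,e_{k'}\otimes e_{l'}\rangle_{\mathcal{H}}=\sum_{m}((e_k\otimes e_l)e_m,(e_{k'}\otimes e_{l'})e_m)_H=\delta_{kk'}\delta_{ll'}\,,
$$
and then establish completeness: if $\langle\mathcal{V},e_k\otimes e_l\rangle_{\mathcal{H}}=0$ for all $k,l$, then $(\mathcal{V}e_k,e_l)_H=0$ for every $k,l$, and since $(e_l)_l$ is a basis of $H$ this forces $\mathcal{V}e_k=0$ for all $k$, hence $\mathcal{V}=0$ (again because $(e_k)_k$ spans $H$). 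Consequently $\{e_k\otimes e_l\}_{k,l}$ is a complete orthonormal system in $\mathcal{H}$, and the abstract Hilbert-space theory yields $\mathcal{V}=\sum_{k,l}\gamma_{k,l}\,e_k\otimes e_l$ with convergence in $\|\cdot\|_{\mathcal{H}}$ together with Parseval's identity $\sum_{k,l}\gamma_{k,l}^2=\|\mathcal{V}\|_{\mathcal{H}}^2<\infty$, the last bound being precisely the hypothesis that $\mathcal{V}$ is Hilbert-Schmidt.

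Finally, for the symmetry of the coefficients I would use $(e_k\otimes e_l)^*=e_l\otimes e_k$ (a one-line adjoint computation) together with $\mathcal{V}=\mathcal{V}^*$: working over the real Hilbert space $H$, as in the forward-curve application, $\gamma_{k,l}=(\mathcal{V}e_k,e_l)_H=(e_k,\mathcal{V}e_l)_H=(\mathcal{V}e_l,e_k)_H=\gamma_{l,k}$. There is no deep obstacle here; the only points that require care are fixing the tensor-product convention so that the coefficients come out as $(\mathcal{V}e_k,e_l)_H$ and the symmetry condition reads $\gamma_{k,l}=\gamma_{l,k}$, and justifying \emph{completeness} of the system $\{e_k\otimes e_l\}$ rather than merely its orthonormality. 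Everything else is bookkeeping.
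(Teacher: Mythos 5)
Your proof is correct and takes essentially the same route as the paper's: expand $\mathcal{V}$ in the system $\{e_k\otimes e_l\}_{k,l\in\N}$, get $\sum_{k,l}\gamma_{k,l}^2=\|\mathcal{V}\|_{\mathcal{H}}^2<\infty$ (your Parseval identity is exactly the paper's hands-on norm computation), and deduce $\gamma_{k,l}=\gamma_{l,k}$ from symmetry of $\mathcal{V}$. The only differences are cosmetic: where the paper justifies the expansion by citing the isometric isomorphism $\mathcal{H}\cong H\otimes H$, you verify orthonormality and completeness of $\{e_k\otimes e_l\}$ directly (a slightly more self-contained treatment), and your symmetry argument specializes the paper's test with general $f,g$ to the basis vectors $f=e_k$, $g=e_l$.
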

\begin{proof}
Recall that the space of Hilbert-Schmidt operators on $H$, denoted by $\mathcal{H}$, is isometrically isomorph to $H^*\otimes H$, which we identify as $H\otimes H$. Therefore, any Hilbert-Schmidt operator $\mathcal{V}$ can be written as
\[ \mathcal{V} = \sum_{k,l\in\N} \gamma_{k,l} e_k\otimes e_l, \]
for a sequence of constants $(\gamma_{k,l})_{k,l\in\N}$. 

As for the square-summability of the constants, we have necessarily
\begin{align*}
	\|\mathcal{V}\|_{\mathcal{H}}^2 
	= \sum_{i\in\N} \bigg|\sum_{k,l\in\N} \gamma_{k,l}(e_k\otimes e_l)e_i \bigg|_H^2 
	& = \sum_{i\in\N} \bigg|\sum_{l\in\N} \gamma_{i,l}e_l\bigg|_H^2 \\
	& = \sum_{i\in\N} \bigg( \sum_{l\in\N} \gamma_{i,l}e_l, \sum_{m\in\N} \gamma_{i,m}e_m\bigg)_H  \\
	& = \sum_{i,m\in\N} \gamma_{i,m}^2 (e_m,e_m)_H \\
	& = \sum_{i,m\in\N} \gamma_{i,m}^2\,.
\end{align*}
So, in order for the norm to be finite, the double sequence $(\gamma_{k,l})_{k,l\in\N}$ has to be square-summable.

As for the property of being symmetric, we need to have for all $f,g\in\dom(\mathcal{V})$ that
\begin{equation}\label{prf:app1}
  \big(\mathcal{V}f,g\big)_H = \big(f,\mathcal{V}g\big)_H.
\end{equation}
Since Hilbert-Schmidt operators are bounded (even compact), one has $\dom(\mathcal{V})=H$. As a side-remark, this furthermore implies that $\dom(\mathcal{V}^*)\supseteq \dom(\mathcal{V}) = H$, which in turn implies that symmetric Hilbert-Schmidt operators are already self-adjoint. The terms in \eqref{prf:app1} can be evaluated to be
\[ \big(\mathcal{V}f,g\big)_H = \sum_{k,l} \gamma_{k,l}f_kg_l \quad\text{and}\quad \big(f,\mathcal{V}g\big)_H = \sum_{k,l} \gamma_{k,l}f_lg_k, \]
where $f_k =(f,e_k)_H$, and similarly for $g_l$. These terms can only be equal for all $f,g\in \dom(\mathcal{V}) = H$ if either $k=l$ or if $\gamma_{k,l}=\gamma_{l,k}$, which implies the assertion. 
\end{proof}

\end{document}